    \title{Heteroclinic Connection in a Nicholson's delayed model with Harvesting term.}    ﻿
    \author{Adrian Gomez \thanks{e-mail:agomez@ubiobio.cl}\\{\small\itshape{Departamento de Matem\'atica,}}\\ {\small \itshape{Universidad del B\'io-B\'io, Casilla 5-C, Concepci\'on, Chile}}\and Cesar  Guayasamin \thanks{e-mail: caguayasamin@outlook.com}\\{\small\itshape{Departamento de Matem\'atica,}}\\ {\small \itshape{Universidad del B\'io-B\'io, Casilla 5-C, Concepci\'on, Chile}}}
    \date{}
    \theoremstyle{definition}
    \newtheorem{definition}{Definition}[section]
    \newtheorem{theorem}{Theorem}[section]
    \newtheorem{lemma}{Lemma}[section]
    \def\R{\mathbb{R}}
\begin{document}
    \maketitle
    
    \begin{abstract}
    ﻿
    In this paper we prove the existence of monotone heteroclinic solutions for the delayed Nicholson's blowflies model with harvesting:
    \[
    x'(t) = -\delta x(t) - Hx(t-\sigma) + \rho x(t-r)e^{-x(t-r)}.
    \]
    Under the condition $1 < \dfrac{\rho}{\delta+H} \leq e$, we establish the connection between the equilibria $0$ and $\ln(\rho/(\delta+H))$ using the Wu and Zou monotone iteration method adapted for two delays ($\sigma \neq r$). The proof combines explicit upper and lower solutions construction with characteristic equation analysis, supported by numerical simulations.
    \end{abstract}
    \bigskip
    ﻿
    {\small \noindent {\em MSC 2020 Classification} : 34K26, 34K20, 92D25, 34K12, 34K19.
    ﻿
    ﻿
    \medskip
    \noindent {\em Key words} :  Heteroclinic solution, Nicholson's blowflies model, harvesting delay, monotone iteration.}
    ﻿
    \newpage
    ﻿
    %%%%%%%%%%%%%%%%%%%%%%%%%%%%%%%%%%%%%%%%%%%%%%%%%%%%%%%%%%%%%%%%%%%%%%%%%%%%%%
    %%%%%%%%%%%%%%%%%%%%%%%%%%%%%%%%%%%%%%%%%%%%%%%%%%%%%%%%%%%%%%%%%%%%%%%%%%%%%%
    \section{Introduction}
    
    The study of population dynamics plays a fundamental role across biological disciplines, from epidemiology to ecology and resource management. Mathematical models have emerged as essential tools for understanding and predicting complex population behaviors \cite{FBCC}. A key challenge in modeling biological systems lies in accounting for temporal delays inherent to ecological processes - including maturation periods, disease incubation, resource regeneration, and population response times \cite{TE}. Delay differential equations (DDEs) provide a powerful framework for incorporating these critical time lags, as demonstrated by their extensive applications in theoretical biology \cite{YK,Hale,LIN}.
    ﻿
    Among the most influential case studies in population dynamics is the Australian sheep blowfly (\textit{Lucilia cuprina}), a pest species whose population oscillations were first systematically documented by Nicholson in 1954 \cite{AJN}. Gurney et al. \cite{WSG} later formulated the canonical Nicholson blowfly model:
    ﻿
    \begin{equation}
    \frac{dx(t)}{dt} = \rho x(t-\tau)e^{-ax(t-\tau)} - \delta x(t)
    \end{equation}
    ﻿
    where $x(t)$ represents adult population density, $\rho$ the maximum reproduction rate, $\delta$ mortality rate, and $\tau$ the maturation delay. This equation belongs to the important class of recruitment-delay models that have shaped our understanding of delayed population dynamics \cite{LBEBI}.
    ﻿
    Recent decades have witnessed significant extensions of the Nicholson model, including (non-exhaustive list): Variable coefficient formulations \cite{LBEBI}, Distributed delay approaches, Integro-differential versions, diffusive versions, among others.
    These developments have yielded important insights into: Existence and stability of positive solutions \cite{LBEB}, Oscillation dynamics, Periodic and almost-periodic behaviors. Particularly relevant to our work, when $\frac{\rho}{\delta} \in (1,e^2]$, the system exhibits: Monotone heteroclinic connections for $r \in [0,r^*]$ (Theorem 2.3 \cite{LBEBI}), Eventually monotone, non-monotone (Doing $c=+\infty$ in Theorem 1 \cite{CHKKNT}), Slowly oscillatory heteroclinic solutions (Corollary 2.4 \cite{LBEBI})
    \begin{figure}[h]
    \centering
    \includegraphics[width=0.9\textwidth]{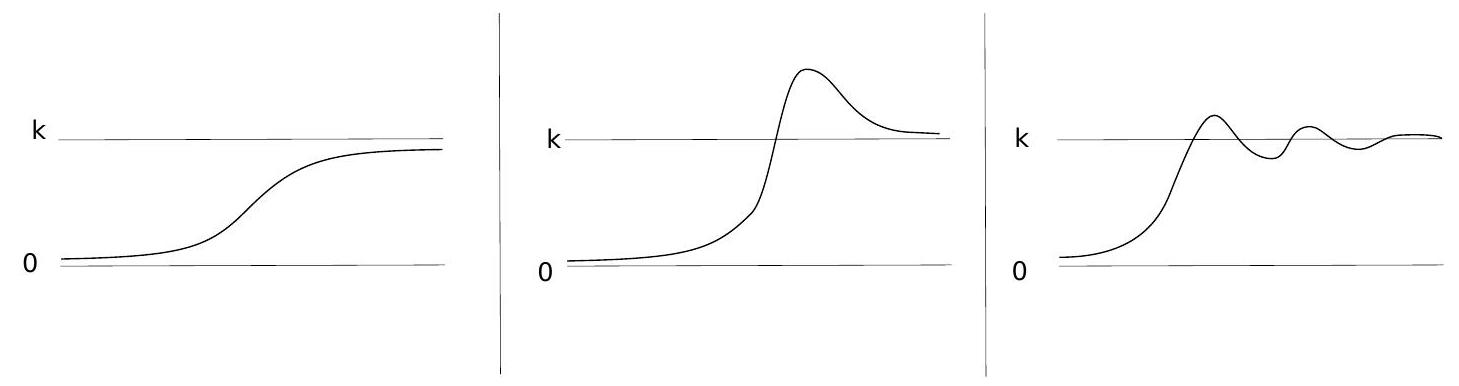}
    \caption{Heteroclinic solution types: monotone (left), eventually monotone (center), and slowly oscillatory (right)}
    \end{figure}
    ﻿
    The current study, which aims to prove the existence of heteroclinic connections for a Nicholson's model with harvesting,  builds upon the foundational framework established by Wu and Zou \cite{WZ}, which was originally designed to analyze traveling wave solutions in delayed reaction-diffusion systems of the form:
    ﻿
    \begin{equation}
    \frac{\partial u(x,t)}{\partial t} = D\frac{\partial^2 u(x,t)}{\partial x^2} + f(u_t(x)).
    \end{equation}
    ﻿
    Their approach combines three key elements: the systematic construction of ordered upper-lower solution pairs, a convergent monotone iteration scheme based on integral operators, and the critical exponential quasi-monotonicity condition, which enables the treatment of non-monotone reaction terms through nonstandard ordering techniques.
    ﻿
    While Wu and Zou's theory was originally developed for proving the existence of traveling wave solutions, in this work, we adapt  their methodology to a different setting: the study of heteroclinic orbits in a Nicholson-type equation with harvesting and two delays. Specifically, we focus on
    ﻿
    \begin{equation}\label{NichH}
    \frac{d x(t)}{d t}=\rho x(t-r) e^{-x(t-r)}-\delta x(t)-H x(t-\sigma),
    \end{equation}
    ﻿
    where $H$ denotes the harvesting rate, $\sigma$ represents the harvesting delay, and $\rho, \tau, \delta$ are the same as in equation (1). \\
    The delay $\sigma$ can be interpreted as follows:  The parameter $\sigma$ represents the remaining lifetime that an individual, removed from the population at the present time, would have had in the absence of intervention. From a demographic perspective, this can be interpreted as the additional time the individual would have remained in the population according to the survival function. In the context of ecological or fisheries models, this quantity reflects the anticipated impact of harvesting on future dynamics: by removing an individual at the present time, not only is the current population size reduced, but also its future trajectory is altered, since that individual would have contributed to the population for an additional period of length $\sigma$ had it not been harvested.
    \\
     Due to the introduction of the additional delay $\sigma > 0$, the model \eqref{NichH} becomes a system whose global dynamics remain largely unexplored. In particular, the case with delays $r \neq \sigma$ corresponds to one of the open problems proposed in \cite{LBEBI}.
    ﻿
    ﻿
    A key challenge in adapting Wu and Zou’s method to equation \eqref{NichH} is the presence of two distinct delays, which significantly complicates the construction of upper and lower solutions. To the best of our knowledge, the only prior attempt to extend this approach to equations with two delays appears in \cite{LHZX,CHXD} and in the  recent preprint \cite{BarkerMinh}, where they develop a upper and  lower solution framework for a diffusive Nicholson model. However, their models  differs substantially from the one developed here. Our contribution provides a complementary perspective by addressing the existence of heteroclinic orbits through a novel adaptation of Wu and Zou’s methodology.\\
    It is not difficult to see that if \( \frac{\rho}{\delta+H}>1 \), then the Nicholson model with a linear harvesting term (4) has two equilibria: The null equilibrium \( x_{0} \equiv 0 \) and a positive equilibrium \( x_{\kappa} \equiv \ln \left(\frac{\rho}{\delta+H}\right) \). 
    The main result obtained in this work is the following:
    \newpage
    \begin{theorem}\label{uno}
     Let  \( 1<\frac{\rho}{\delta+H} \leq e \), and  \( \lambda>0 \)  the positive root of the equation
    ﻿
    \begin{equation}\label{eq6}
    -z-\delta-H e^{-\sigma z}+\rho e^{-r z}=0. 
    \end{equation}
    ﻿
    If  the delays \( r, \sigma \) satisfy the conditions:
    ﻿
    \begin{enumerate}
      \item \( \sigma<r \);
      \item \( \sigma \in\left(0, \sigma_{0}\right] \), where \( \sigma_{0} \) satisfies the relation \( H \sigma_{0} e^{1+\sigma_{0} \delta}=1 \);
      \item \( H e^{-(\lambda+\epsilon) \sigma}-\rho e^{-(\lambda+\epsilon) r} \geq 0 \), for some \( 0<\epsilon<\lambda \);
      \end{enumerate}
    Then the Nicholson equation with delay and linear harvesting term \eqref{NichH} has a monotone heteroclinic solution connecting the equilibria \( x_{0}=0 \) and \( x_{\kappa}=\ln \left(\frac{\rho}{\delta+H}\right)>0 \).
    ﻿
    ﻿
    \end{theorem}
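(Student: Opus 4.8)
The plan is to obtain the connection as a fixed point of a monotone integral operator, adapting the Wu--Zou scheme to carry the two delays $r\neq\sigma$. Fix $\beta>\delta$ and rewrite \eqref{NichH} as $x'(t)+\beta x(t)=\mathcal H(x_{t})$, where $\mathcal H(\phi):=(\beta-\delta)\phi(0)+\rho\,\phi(-r)e^{-\phi(-r)}-H\phi(-\sigma)$ for $\phi\in C([-r,0],\R)$. Since $1<\tfrac{\rho}{\delta+H}\le e$ forces $\kappa:=\ln\!\big(\tfrac{\rho}{\delta+H}\big)\in(0,1]$, the map $u\mapsto\rho u e^{-u}$ is nondecreasing on $[0,\kappa]$; and from $\rho e^{-\kappa}=\delta+H$ one gets $\mathcal H(0)=0$ and $\mathcal H(\kappa)=\beta\kappa$, so $x\equiv 0$ and $x\equiv\kappa$ are fixed points of
\[
(\mathcal T\phi)(t):=e^{-\beta t}\int_{-\infty}^{t}e^{\beta s}\,\mathcal H(\phi_{s})\,ds ,
\]
and a bounded solution of \eqref{NichH} on $\R$ is precisely a fixed point of $\mathcal T$. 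I would look for one in a profile set $\Gamma$ consisting of nondecreasing functions $\phi$ with $\underline x\le\phi\le\overline x$, where $(\underline x,\overline x)$ is an ordered pair of lower/upper solutions with $\underline x(-\infty)=\overline x(-\infty)=0$ and $\underline x(+\infty)=\overline x(+\infty)=\kappa$.

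The only term of $\mathcal H$ that breaks quasi--monotonicity is the delayed harvesting $-H\phi(-\sigma)$, and this is exactly what condition~2 tames. Indeed, $H\sigma_{0}e^{1+\sigma_{0}\delta}=1$ with $\sigma\le\sigma_{0}$ is precisely the statement that $\max_{\beta>0}\big(\beta-\delta-He^{\beta\sigma}\big)\ge 0$, i.e.\ that there is a $\beta>\delta$ with $\beta-\delta\ge He^{\beta\sigma}$. Fixing such a $\beta$ and using $\sigma<r$, one checks the exponential quasi--monotonicity of $\mathcal H$: if $\psi\le\phi$ pointwise on $[-r,0]$ and $s\mapsto e^{\beta s}\big(\phi(s)-\psi(s)\big)$ is nondecreasing there, then from $w(-\sigma)\le e^{\beta\sigma}w(0)$ (with $w=\phi-\psi\ge 0$) and the monotonicity of $u\mapsto\rho ue^{-u}$ on $[0,\kappa]$,
\[
\mathcal H(\phi)-\mathcal H(\psi)\ \ge\ (\beta-\delta)w(0)-Hw(-\sigma)\ \ge\ \big(\beta-\delta-He^{\beta\sigma}\big)w(0)\ \ge\ 0 .
\]
This is the version of monotonicity that drives the iteration and the order bounds below.

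The heart of the proof is the pair $(\underline x,\overline x)$. The root $\lambda>0$ of \eqref{eq6} is the root of the characteristic equation of \eqref{NichH} linearized at $0$, so $e^{\lambda t}$ is the decay rate of the connection as $t\to-\infty$. For the upper solution I would start from $\overline x_{0}(t):=\min\{\kappa,\kappa e^{\lambda t}\}$ and replace it near and beyond its corner by a modification (possibly including a controlled excursion above $\kappa$ that stays below $1$) so that the supersolution inequality holds a.e.\ with downward corners; here $\sigma<r$ and $\sigma\le\sigma_{0}$ enter to control the harvesting contribution on the window where the profile is already near $\kappa$ but the delayed arguments $t-r$, $t-\sigma$ are not yet saturated, since there $\rho\overline x(t-r)e^{-\overline x(t-r)}\le\rho\kappa e^{-\kappa}=(\delta+H)\kappa$ while the harvesting deficit is bounded by the same quantity governing $\beta$. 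For the lower solution I would take $\underline x(t):=\max\{0,\kappa e^{\lambda t}-q\,e^{(\lambda+\epsilon)t}\}$ near $-\infty$, glued at a suitable $T_{1}$ (an upward corner, since the formula is decreasing there) to an increasing branch climbing to $\kappa$ at the rate of the largest negative root of $\widetilde\Delta(\mu):=-\mu-\delta-He^{-\mu\sigma}+(\delta+H)(1-\kappa)e^{-\mu r}$ — which exists because $\sigma<r$. With $\Delta(z):=-z-\delta-He^{-\sigma z}+\rho e^{-rz}$, the subsolution inequality (with the harvesting term read in the $\beta$--weighted sense along $\underline x$) reduces at the leading correction order $e^{(\lambda+\epsilon)t}$ to $\Delta(\lambda+\epsilon)<0$, which condition~3 gives at once since
\[
\Delta(\lambda+\epsilon)=-(\lambda+\epsilon)-\delta-\big(He^{-(\lambda+\epsilon)\sigma}-\rho e^{-(\lambda+\epsilon)r}\big)\ \le\ -(\lambda+\epsilon)-\delta\ <\ 0 ,
\]
while $0<\epsilon<\lambda$ keeps the nonlinear correction $\rho\big(u-ue^{-u}\big)=O(u^{2})$ of strictly higher order than $e^{(\lambda+\epsilon)t}$ as $t\to-\infty$; choosing $q>0$ appropriately ensures $\underline x\le\overline x$.

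With $(\underline x,\overline x)$ in hand the rest is routine: $\mathcal T(\Gamma)\subseteq\Gamma$ follows from the upper/lower solution inequalities together with the exponential quasi--monotonicity above; since the equation bounds the derivative of $\mathcal T\phi$, the set $\mathcal T(\Gamma)$ is precompact for local uniform convergence, so Schauder's theorem — equivalently the monotone iteration $\phi_{n+1}=\mathcal T\phi_{n}$ from $\phi_{0}=\overline x$ — yields $\phi^{\ast}=\mathcal T\phi^{\ast}\in\Gamma$, hence a solution $x:=\phi^{\ast}$ of \eqref{NichH} on $\R$. The squeeze $0\le\underline x\le x\le\overline x$ forces $x(-\infty)=0$; being bounded and nondecreasing, $x$ tends as $t\to+\infty$ to an equilibrium of \eqref{NichH}, and since $\underline x\not\equiv 0$ we have $x\not\equiv 0$, so that limit is $\kappa$. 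This gives the monotone heteroclinic connection and proves Theorem~\ref{uno}. The step I expect to be the genuine obstacle is the simultaneous construction of $(\underline x,\overline x)$: producing a \emph{single} ordered pair with the prescribed limits that satisfies both differential inequalities \emph{everywhere} — in particular across the seams of the piecewise profiles and on the windows where $t-r$ and $t-\sigma$ sit on opposite sides of a seam — which is the bookkeeping that forces hypotheses~1 and~2 and replaces the one--delay construction of \cite{LBEBI}. A secondary delicate point is that, because the harvesting term is only exponentially (not plainly) quasi--monotone, one must verify carefully that $\mathcal T$ actually preserves the nondecreasing profiles in $\Gamma$, or else recover monotonicity of $x$ by an a posteriori sliding argument.
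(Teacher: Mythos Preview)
Your overall framework coincides with the paper's: rewrite \eqref{NichH} as $x'+\beta x=\mathcal H(x_t)$, verify exponential quasi--monotonicity of $\mathcal H$ (your computation is exactly Lemma~\ref{lemaH2}, and your reading of condition~2 as $\max_{\beta>0}(\beta-\delta-He^{\beta\sigma})\ge 0$ is precisely how the paper produces $\mu=\beta=\tfrac{1}{\sigma}+\delta$ in Lemma~\ref{lema_cond1}), build an ordered pair of upper/lower solutions, and run the monotone iteration. Your identification of condition~3 as the mechanism forcing $\chi_0(\lambda+\epsilon)<0$ at the correction order of the subsolution is also the paper's (Lemma~\ref{lema_subsol}).

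Where your sketch diverges is in the pair $(\underline x,\overline x)$ itself, and two points would cause trouble. First, you insist on $\underline x(+\infty)=\kappa$ and propose gluing the exponential tail to ``an increasing branch climbing to $\kappa$'' governed by the characteristic equation at $\kappa$. This is unnecessary and is where you would get stuck: the abstract machinery (Theorem~\ref{principal}, via Lemma~\ref{lemalim}) only needs $\underline\varphi\not\equiv 0$ together with (H1) to force the limit $x(+\infty)=\kappa$. Accordingly the paper's lower solution is simply $\underline\varphi(t)=\alpha\big(1-e^{\epsilon(t-t_0)}\big)e^{\lambda t}$ for $t\le t_0$ and \emph{identically zero} for $t>t_0$ --- no second branch, no linearization at $\kappa$. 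Second, your suggestion that $\overline x$ might include ``a controlled excursion above $\kappa$'' would take $\overline x$ out of the profile set $\Gamma$ (a nondecreasing profile with limit $\kappa$ is automatically $\le\kappa$), and the whole iteration --- Lemma~\ref{lemma:monotonicity_H} and Lemma~\ref{lemaop} --- relies on $\bar\varphi\in\Gamma$. The paper's upper solution stays below $\kappa$ throughout: it is the $C^1$ profile $\bar\varphi(t)=\tfrac{\mu}{\mu+\lambda}\kappa\,e^{\lambda t}$ for $t\le 0$ and $\kappa\big(1-\tfrac{\lambda}{\mu+\lambda}e^{-\mu t}\big)$ for $t>0$, with $\mu=\beta$ as above; the case--by--case verification in Lemma~\ref{supersol} (four windows determined by which of $t,\,t-\sigma,\,t-r$ lie on either side of $0$) is exactly the seam bookkeeping you anticipated, and condition~2 enters there through the inequality $\mu-\delta-He^{\mu\sigma}\ge 0$ rather than only through the quasi--monotonicity of $\mathcal H$.
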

    ﻿
     The paper is organized as follows: In the section 1 we introduces the theoretical framework adapted from \cite{WZ} to our model. In the section 2 we apply this theory to  the Nicholson equation \eqref{NichH} and build the  upper and lower solutions. Finally we  presents numerical experiments based on the analytical approximation to the desired solution. 
    ﻿
    %%%%%%%%%%%%%%%%%%%%%%%%%%%%%%%%%%%%%%%%%%%%%%%%%%%%%%%%%%%%%%%
    %%%%%%%%%%%%%%%%%%%%%%%%%%%%%%%%%%%%%%%%%%%%%%%%%%%%%%%%%%%%%%%%%%%%%%%%%%%%
    ﻿
    \section{Theoretical framework}
    To make this article self-contained, we adapt the theory in \cite{WZ} to functional first order equations having the form 
    ﻿
    \begin{equation}\label{edofuncional}
    -u^{\prime}(t)+f\left(u_{t}\right)=0, 
    \end{equation}
    ﻿
    for $f: C([-\tau, 0], \mathbb{R}) \rightarrow \mathbb{R}$ a continuous functional and $u_{t} \in C([-\tau, 0] ; \mathbb{R})$  given by
    ﻿
    $$
    u_{t}(s)=u(s+t), \quad s \in[-\tau, 0], t \geq 0,
    $$
    ﻿
    Note that our case in \eqref{NichH} is a particular case of \eqref{edofuncional}, with $\tau=\max\{\sigma,r\}$ and $f(\phi)=\rho \phi(-r)e^{-\phi(-r)}-\delta \phi(0)-H\phi(-\sigma)$.
    ﻿
    \begin{definition} If $u_{0} \equiv 0$ and $u_{\kappa} \equiv K$ are two equilibrium solutions of \eqref{edofuncional}, $\phi \in C^{1}(\mathbb{R}, \mathbb{R})$ is monotone, satisfies equation \eqref{edofuncional} and additionally verifies the asymptotic conditions
    ﻿
    $$
    \lim _{s \rightarrow-\infty} \phi(s)=0, \lim _{s \rightarrow+\infty} \phi(s)=K
    $$
    ﻿
    Then $\phi$ is said to be a monotone heteroclinic solution connecting $u_{0}$ and $u_{\kappa}$.
    \end{definition}
    Furthermore, given $w \in \mathbb{R}^{n}$, we define $\widehat{w} \in C([-\tau, 0] ; \mathbb{R})$ as $\widehat{w}(s)=w$ for all $s \in$ $[-\tau, 0]$.\\
    Now, we assume the following hypotheses about the functional $f$:
    \begin{description}
    \item[(H1)] There exists $K>0$ such that $f(\widehat{0})=f(\widehat{K})=0$ and $f(\widehat{u}) \neq 0$ for arbitrary $u \in(0, K)$,\\
    \item[(H2)] (Exponential quasi-monotonicity). There exists $\beta \geq 0$ such that
    ﻿
    $$
    f(\phi)-f(\psi)+\beta[\phi(0)-\psi(0)] \geq 0
    $$
    ﻿
    for all $\phi, \psi \in C([-\tau, 0], \mathbb{R})$ such that
    \begin{enumerate}
    \item[(i)] $0 \leq \psi(s) \leq \phi(s) \leq K, s \in[-\tau, 0]$,
    \item[(ii)] $e^{\beta s}(\phi(s)-\psi(s))$ is non-decreasing for $s \in[-\tau, 0]$.
    \end{enumerate}
    \end{description}
    We also define the following set $\Gamma$, called the profile set,\\
    $$\Gamma=\left\{\phi \in C(\mathbb{R}, \mathbb{R}) \left\lvert\, \begin{array}{c}\phi(-\infty)=0, \phi(+\infty)=K, \\ \phi \text{ is non-decreasing in } \mathbb{R}, \\ e^{\beta t}(\phi(t+s)-\phi(t)) \text{ is non-decreasing in } t \in \mathbb{R}, \text{ for all } s>0,\end{array}\right.\right\}.$$
    ﻿
    ﻿
    In this case, the constant $\beta$ is the same as in hypothesis (H2). Additionally, we define the operator $\mathbf{H}: C(\mathbb{R}, \mathbb{R}) \rightarrow C(\mathbb{R}, \mathbb{R})$ by
    ﻿
    \begin{equation}\label{Hdef} 
    \mathbf{H}(\phi)(t):=f\left(\phi_{t}\right)+\beta \phi_{t}(0), \quad \phi \in C(\mathbb{R}, \mathbb{R}), t \in \mathbb{R} 
    \end{equation}
    ﻿
    The following result provides some properties about the monotonicity of $\mathbf{H}$ on the profile set $\Gamma$.
    ﻿
    \begin{lemma} \label{lemma:monotonicity_H} 
    Suppose (H1) and (H2) are satisfied. Then for any $\phi \in \Gamma$, we have that
    \begin{enumerate}
        \item[(i)] $\boldsymbol{H}(\phi)(t) \geq 0$ for all $t \in \mathbb{R}$,
        \item[(ii)] $\boldsymbol{H}(\phi)(t)$ is non-decreasing in $t \in \mathbb{R}$,
        \item[(iii)] $\boldsymbol{H}(\psi)(t) \leq \boldsymbol{H}(\phi)(t)$ for all $t \in \mathbb{R}$, if $\psi \in C(\mathbb{R}, \mathbb{R})$ satisfies $0 \leq \psi(t) \leq \phi(t) \leq K$ and $e^{\beta t}[\phi(t)-\psi(t)]$ is non-decreasing in $t \in \mathbb{R}$.
    \end{enumerate}
    \end{lemma}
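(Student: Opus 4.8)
The plan is to derive all three assertions as immediate applications of the exponential quasi-monotonicity hypothesis \textbf{(H2)}, each time fed with a carefully chosen pair of functions in $C([-\tau,0],\mathbb{R})$. The three features of $\phi\in\Gamma$ that I will use repeatedly are: $0\le\phi(t)\le K$ on $\mathbb{R}$, $\phi$ non-decreasing, and $t\mapsto e^{\beta t}\bigl(\phi(t+s)-\phi(t)\bigr)$ non-decreasing for each fixed $s>0$. Since $\mathbf{H}(\phi)(t)=f(\phi_t)+\beta\phi_t(0)=f(\phi_t)+\beta\phi(t)$, every inequality to be proved is literally an instance of the inequality in \textbf{(H2)}, so the whole task reduces to checking, in each case, that the two structural conditions (i)--(ii) of \textbf{(H2)} hold for the chosen pair.

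For (i) I would compare $\phi_t$ with the constant profile $\widehat{0}$. Condition (i) of \textbf{(H2)} is clear from $0\le\phi(t+s)\le K$, and condition (ii) reduces to the elementary fact that $s\mapsto e^{\beta s}\phi(t+s)=e^{\beta s}\bigl(\phi_t(s)-\widehat{0}(s)\bigr)$ is non-decreasing on $[-\tau,0]$: for $s_1<s_2$,
\[
e^{\beta s_2}\phi(t+s_2)-e^{\beta s_1}\phi(t+s_1)=\bigl(e^{\beta s_2}-e^{\beta s_1}\bigr)\phi(t+s_2)+e^{\beta s_1}\bigl(\phi(t+s_2)-\phi(t+s_1)\bigr)\ge 0,
\]
using $\beta\ge0$, $\phi\ge0$ and monotonicity of $\phi$. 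Then \textbf{(H2)}, together with $f(\widehat{0})=0$ from \textbf{(H1)}, gives $f(\phi_t)+\beta\phi(t)\ge0$, i.e. $\mathbf{H}(\phi)(t)\ge0$.

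For (ii), given $t_1<t_2$, I would apply \textbf{(H2)} to the pair $\phi_{t_2},\phi_{t_1}$: condition (i) follows from $0\le\phi(t_1+s)\le\phi(t_2+s)\le K$, while for condition (ii) one writes, with $\eta:=t_2-t_1>0$,
\[
e^{\beta s}\bigl(\phi(t_2+s)-\phi(t_1+s)\bigr)=e^{-\beta t_1}\,e^{\beta(t_1+s)}\bigl(\phi((t_1+s)+\eta)-\phi(t_1+s)\bigr),
\]
which is non-decreasing in $s$ precisely by the third defining clause of $\Gamma$ (applied with the running argument $u=t_1+s$). This yields $\mathbf{H}(\phi)(t_1)\le\mathbf{H}(\phi)(t_2)$. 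For (iii) I would apply \textbf{(H2)} to $\phi_t,\psi_t$: condition (i) is immediate from $0\le\psi\le\phi\le K$, and condition (ii) holds because $e^{\beta s}\bigl(\phi(t+s)-\psi(t+s)\bigr)=e^{-\beta t}e^{\beta(t+s)}\bigl(\phi(t+s)-\psi(t+s)\bigr)$ inherits monotonicity in $s$ from the assumed monotonicity of $u\mapsto e^{\beta u}\bigl(\phi(u)-\psi(u)\bigr)$, giving $\mathbf{H}(\psi)(t)\le\mathbf{H}(\phi)(t)$. The only step needing genuine care is verifying condition (ii) of \textbf{(H2)} in each case: it is a one-line estimate for (i) and (iii), but for (ii) it is exactly the place where the most restrictive (third) clause in the definition of the profile set $\Gamma$ is indispensable; the remainder is bookkeeping.
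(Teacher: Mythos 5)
Your proof is correct and follows essentially the same route as the paper's: each part is an application of \textbf{(H2)} to the pairs $(\phi_t,\widehat{0})$, $(\phi_{t+s},\phi_t)$, and $(\phi_t,\psi_t)$, with the third defining clause of $\Gamma$ supplying condition (ii) of \textbf{(H2)} in part (ii). Your explicit check that $s\mapsto e^{\beta s}\phi(t+s)$ is non-decreasing in part (i) fills in a detail the paper leaves implicit when it says (i) ``follows directly'' by taking $\psi\equiv 0$.
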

    ﻿
    \begin{proof}
     The proof of (i) follows directly from (H1) and (H2) by taking $\psi \equiv 0$. To prove (ii), let $t \in \mathbb{R}$ and $s>0$. Then
    ﻿
    $$
    0 \leq \phi_{t}(z) \leq \phi_{t+s}(z) \leq K, \quad z \in[-\tau, 0]
    $$
    ﻿
    and
    ﻿
    $$
    e^{\beta z}\left[\phi_{t+s}(z)-\phi_{t}(z)\right]=e^{-\beta t} e^{\beta(t+z)}[\phi(t+z+s)-\phi(t+z)]
    $$
    ﻿
    is non-decreasing in $z \in[-\tau, 0]$, since $\phi \in \Gamma$. It follows from (H2) that
    ﻿
    $$
    \begin{aligned}
    \mathbf{H}(\phi)(t+s)-\mathbf{H}(\phi)(t) & =f\left(\phi_{t+s}\right)-f\left(\phi_{t}\right)+\beta[\phi(t+s)-\phi(t)] \\
    & =f\left(\phi_{t+s}\right)-f\left(\phi_{t}\right)+\beta\left[\phi_{t+s}(0)-\phi_{t}(0)\right] \\
    & \geq 0
    \end{aligned}
    $$
    ﻿
    Thus, $\mathbf{H}(\phi)(t)$ is non-decreasing in $t \in \mathbb{R}$. Finally, for (iii). Observe that
    ﻿
    $$
    0 \leq \psi_{t}(z) \leq \phi_{t}(z) \leq K, \quad z \in[-\tau, 0]
    $$
    ﻿
    and
    ﻿
    $$
    e^{\beta z}\left[\phi_{t}(z)-\psi_{t}(z)\right]=e^{-\beta t} e^{\beta(t+z)}[\phi(t+z)-\psi(t+z)]
    $$
    ﻿
    is non-decreasing in $z \in[-\tau, 0]$. Then, by (H2)
    ﻿
    $$
    \begin{aligned}
    \mathbf{H}(\phi)(t)-\mathbf{H}(\psi)(t) & =f\left(\phi_{t}\right)-f\left(\psi_{t}\right)+\beta[\phi(t)-\psi(t)] \\
    & =f\left(\phi_{t}\right)-f\left(\psi_{t}\right)+\beta\left[\phi_{t}(0)-\psi_{t}(0)\right] \\
    & \geq 0 .
    \end{aligned}
    $$
    ﻿
    This completes the proof.
    \end{proof}
    ﻿
    ﻿
    ﻿
    In order to initiate our  iterative scheme, a upper solution of \eqref{edofuncional} is needed, which is defined as follows:
    ﻿
    \begin{definition} A upper solution of \eqref{edofuncional} is a continuous function $\varphi: \mathbb{R} \rightarrow \mathbb{R}$ whose first derivative $\varphi^{\prime}$ exists almost everywhere, is essentially bounded on $\mathbb{R}$, and satisfies the inequality
    ﻿
    $$
    -\varphi^{\prime}(t)+f\left(\varphi_{t}\right) \leq 0, \quad \text{a.e. } t \in \mathbb{R}
    $$
    ﻿
    A lower solution of \eqref{edofuncional} is a function defined similarly  with the reversed inequality.
    \end{definition}
    ﻿
    In the following results, in addition to assuming that $(H1)$ and $(H2)$ are satisfied, we also suppose that equation \eqref{edofuncional} has a upper solution $\bar{\varphi} \in \Gamma$ and a lower solution $\underline{\varphi}$ (not necessarily in $\Gamma$) satisfying:
    \begin{description}
    \item[(C1)] $0 \leq \underline{\varphi}(t) \leq \bar{\varphi}(t) \leq K, t \in \mathbb{R}$,
    \item[(C2)] $\underline{\varphi} \not \equiv 0$.
    \item[(C3)] $e^{\beta t}[\bar{\varphi}(t)-\underline{\varphi}(t)]$ is non-decreasing for $t \in \mathbb{R}$.
    \end{description}
    The first iteration involves the following linear non-homogeneous ordinary differential equation
    ﻿
    \begin{equation}\label{edoit}
    x_{1}^{\prime}(t)=-\beta x_{1}(t)+\mathbf{H}(\bar{\varphi})(t), \quad t \in \mathbb{R} 
    \end{equation}
    ﻿
    Among all solutions of equation \eqref{edoit}, we choose an appropriate solution. The definition and some properties of this particular solution are shown below.
    \newpage
    \begin{lemma} \label{lemaop} Define $x_{1}: \mathbb{R} \rightarrow \mathbb{R}$ as
    ﻿
    \begin{equation}\label{operador}
    x_{1}(t)=\int_{-\infty}^{t} e^{-\beta(t-s)} \boldsymbol{H}(\bar{\varphi})(s) d s, \quad t \in \mathbb{R} 
    \end{equation}
    ﻿
    Then
    \begin{enumerate}
    \item[(i)] $x_{1}$ is a solution of \eqref{edoit}.
    \item[(ii)] $x_{1} \in \Gamma$.
    \item[(iii)] $\underline{\varphi}(t) \leq x_{1}(t) \leq \bar{\varphi}(t)$ for $t \in \mathbb{R}$.
    \end{enumerate}
    ﻿
    \end{lemma}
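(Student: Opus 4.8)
The plan is to establish the three claims in order, leaning on the explicit convolution structure of $x_1$ and on the monotonicity of $\mathbf{H}(\bar{\varphi})$ granted by Lemma~\ref{lemma:monotonicity_H}. As a preliminary, note that $\bar{\varphi}\in\Gamma$ is continuous with finite limits $0$ and $K$ at $-\infty$ and $+\infty$, hence uniformly continuous on $\mathbb{R}$; therefore $t\mapsto\bar{\varphi}_t$ is continuous from $\mathbb{R}$ into $C([-\tau,0],\mathbb{R})$ and, $f$ being continuous, $\mathbf{H}(\bar{\varphi})$ is a bounded continuous function. By Lemma~\ref{lemma:monotonicity_H}(i)--(ii) it is moreover non-negative and non-decreasing, and since $\bar{\varphi}_t\to\widehat{0}$ (resp.\ $\widehat{K}$) in $C([-\tau,0],\mathbb{R})$ as $t\to-\infty$ (resp.\ $t\to+\infty$), we get $\mathbf{H}(\bar{\varphi})(t)\to f(\widehat{0})+\beta\cdot 0=0$ (resp.\ $f(\widehat{K})+\beta K=\beta K$). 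In particular $0\le\mathbf{H}(\bar{\varphi})(t)\le\beta K$, which (using $\beta>0$) makes the integral in \eqref{operador} absolutely convergent and already gives $0\le x_1(t)\le K$. Claim (i) is then immediate: writing $x_1(t)=e^{-\beta t}\int_{-\infty}^{t}e^{\beta s}\mathbf{H}(\bar{\varphi})(s)\,ds$ and using that $s\mapsto e^{\beta s}\mathbf{H}(\bar{\varphi})(s)$ is continuous and integrable on $(-\infty,t]$, the fundamental theorem of calculus together with the product rule yield $x_1'(t)=-\beta x_1(t)+\mathbf{H}(\bar{\varphi})(t)$, which is \eqref{edoit}; in particular $x_1\in C^{1}(\mathbb{R},\mathbb{R})$.

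For (ii) I would verify the three defining properties of $\Gamma$. The asymptotics $x_1(-\infty)=0$ and $x_1(+\infty)=K$ follow from the change of variables $x_1(t)=\int_{0}^{\infty}e^{-\beta u}\mathbf{H}(\bar{\varphi})(t-u)\,du$ together with dominated convergence (dominating function $\beta K e^{-\beta u}$) and the limits of $\mathbf{H}(\bar{\varphi})$ recorded above. Monotonicity of $x_1$ is the inequality $x_1'(t)=\mathbf{H}(\bar{\varphi})(t)-\beta x_1(t)\ge 0$; since $\mathbf{H}(\bar{\varphi})$ is non-decreasing, the same change of variables gives $x_1(t)\le\mathbf{H}(\bar{\varphi})(t)\int_{0}^{\infty}e^{-\beta u}\,du=\beta^{-1}\mathbf{H}(\bar{\varphi})(t)$, which is exactly what is needed. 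For the third property, fix $s>0$, put $g(t)=x_1(t+s)-x_1(t)$, and use \eqref{edoit} at $t+s$ and at $t$ to get $g'(t)+\beta g(t)=\mathbf{H}(\bar{\varphi})(t+s)-\mathbf{H}(\bar{\varphi})(t)\ge 0$; hence $\frac{d}{dt}\bigl(e^{\beta t}g(t)\bigr)=e^{\beta t}\bigl(g'(t)+\beta g(t)\bigr)\ge 0$, so $e^{\beta t}(x_1(t+s)-x_1(t))$ is non-decreasing, as required.

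For (iii), both bounds arise by multiplying the defining differential inequalities by $e^{\beta t}$ and integrating over $(-\infty,t]$. Since $\bar{\varphi}$ is an upper solution, $\bar{\varphi}'(t)\ge f(\bar{\varphi}_t)$ a.e., so $\frac{d}{dt}\bigl(e^{\beta t}\bar{\varphi}(t)\bigr)\ge e^{\beta t}\mathbf{H}(\bar{\varphi})(t)$ a.e.; the boundary term at $-\infty$ vanishes (because $\bar{\varphi}$ is bounded and $\beta>0$), so integration gives $e^{\beta t}\bar{\varphi}(t)\ge\int_{-\infty}^{t}e^{\beta s}\mathbf{H}(\bar{\varphi})(s)\,ds=e^{\beta t}x_1(t)$, i.e.\ $x_1\le\bar{\varphi}$. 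For the lower bound, the lower solution inequality $\underline{\varphi}'(t)\le f(\underline{\varphi}_t)$ a.e.\ yields $\frac{d}{dt}\bigl(e^{\beta t}\underline{\varphi}(t)\bigr)\le e^{\beta t}\mathbf{H}(\underline{\varphi})(t)$ a.e.; now conditions (C1) and (C3) let me apply Lemma~\ref{lemma:monotonicity_H}(iii) with $\psi=\underline{\varphi}$ and $\phi=\bar{\varphi}$ to obtain $\mathbf{H}(\underline{\varphi})(t)\le\mathbf{H}(\bar{\varphi})(t)$, whence $\frac{d}{dt}\bigl(e^{\beta t}\underline{\varphi}(t)\bigr)\le e^{\beta t}\mathbf{H}(\bar{\varphi})(t)$ a.e.; integrating over $(-\infty,t]$ (the boundary term vanishes since $\underline{\varphi}$ is bounded by (C1)) gives $e^{\beta t}\underline{\varphi}(t)\le e^{\beta t}x_1(t)$, i.e.\ $\underline{\varphi}\le x_1$.

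The points needing real care are the vanishing of the boundary terms at $-\infty$ and the convergence of the improper integrals, all controlled by the uniform bound $0\le\mathbf{H}(\bar{\varphi})\le\beta K$ with $\beta>0$, and, in (iii), the legitimacy of integrating the almost-everywhere inequalities: this is fine because $\bar{\varphi}$ and $\underline{\varphi}$ have essentially bounded derivatives by the definition of upper and lower solution, hence are Lipschitz, hence absolutely continuous, and so are $e^{\beta t}\bar{\varphi}(t)$ and $e^{\beta t}\underline{\varphi}(t)$ on compact intervals, where the fundamental theorem of calculus applies. The one genuinely substantive ingredient is the use of Lemma~\ref{lemma:monotonicity_H}(iii) in the lower bound of (iii); this is the step I expect to be the main obstacle, since it is where the compatibility conditions (C1) and (C3) between $\underline{\varphi}$ and $\bar{\varphi}$ must be brought to bear.
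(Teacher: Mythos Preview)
Your proof is correct and follows essentially the same route as the paper: part (i) by direct differentiation of the convolution formula, part (ii) via the monotonicity of $\mathbf{H}(\bar{\varphi})$ from Lemma~\ref{lemma:monotonicity_H}, and part (iii) by integrating the differential inequalities $w'+\beta w\ge 0$ and $v'+\beta v\ge 0$ for $w=\bar{\varphi}-x_1$ and $v=x_1-\underline{\varphi}$, the latter using Lemma~\ref{lemma:monotonicity_H}(iii) together with (C1), (C3). The only cosmetic differences are that the paper computes the asymptotics of $x_1$ by l'H\^opital rather than by your change of variables plus dominated convergence, and it proves monotonicity of $x_1$ by writing $x_1(t+s)-x_1(t)$ as a non-negative integral rather than by your (slightly slicker) bound $x_1(t)\le\beta^{-1}\mathbf{H}(\bar{\varphi})(t)$ giving $x_1'\ge 0$; neither difference is substantive.
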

    ﻿
    \begin{proof} The proof of (i) is immediate, since from \eqref{operador} we have
    ﻿
    $$
    e^{\beta t} x_{1}(t)=\int_{-\infty}^{t} e^{\beta s} \mathbf{H}(\bar{\varphi})(s) d s, \quad t \in \mathbb{R}
    $$
    ﻿
    Then,
    ﻿
    $$
    e^{\beta t} x_{1}^{\prime}(t)+\beta e^{\beta t} x_{1}(t)=e^{\beta t} \mathbf{H}(\bar{\varphi})(t), \quad t \in \mathbb{R} .
    $$
    ﻿
    This proves that $x_{1}$ satisfies equation \eqref{edoit}. For (ii), we first prove that $x_{1}$ satisfies the two limit conditions, that is
    ﻿
    $$
    \lim _{t \rightarrow-\infty} x_{1}(t)=0 \text{ and } \lim _{t \rightarrow+\infty} x_{1}(t)=K .
    $$
    ﻿
    Indeed, applying l'Hôpital's rule to \eqref{operador}, we obtain
    ﻿
    $$
    \begin{aligned}
    \lim _{t \rightarrow \infty} x_{1}(t) & =\lim _{t \rightarrow \infty} \frac{1}{e^{\beta t}} \int_{-\infty}^{t} e^{\beta s} \mathbf{H}(\bar{\varphi})(s) d s \\
    & =\lim _{t \rightarrow \infty} \frac{\mathbf{H}(\bar{\varphi})(t)}{\beta} .
    \end{aligned}
    $$
    ﻿
    Since $\mathbf{H}(\bar{\varphi})(t)=f\left(\bar{\varphi}_{t}\right)+\beta \bar{\varphi}(t)$, using $(H1)$ and the fact that $\bar{\varphi} \in \Gamma$, we get
    ﻿
    $$
    \lim _{t \rightarrow-\infty} \mathbf{H}(\bar{\varphi})(t)=f(\widehat{0})=0 \text{ and } \lim _{t \rightarrow+\infty} \mathbf{H}(\bar{\varphi})(t)=f(\widehat{K})+\beta K=\beta K .
    $$
    ﻿
    Consequently, $\lim\limits_{t \rightarrow-\infty} x_{1}(t)=0 \text{ and } \lim\limits_{t \rightarrow+\infty} x_{1}(t)=K$.\\
    Now we prove that $x_{1}$ is non-decreasing in $\mathbb{R}$. Let $t \in \mathbb{R}$ and $s>0$ be given. Then
    ﻿
    $$
    \begin{aligned}
    x_{1}(t+s)-x_{1}(t) & =\int_{-\infty}^{t+s} e^{-\beta(t+s-\theta)} \mathbf{H}(\bar{\varphi})(\theta) d \theta-\int_{-\infty}^{t} e^{-\beta(t-\theta)} \mathbf{H}(\bar{\varphi})(\theta) d \theta \\
    & =\int_{-\infty}^{t} e^{-\beta(t-\theta)} \mathbf{H}(\bar{\varphi})(\theta+s) d \theta-\int_{-\infty}^{t} e^{-\beta(t-\theta)} \mathbf{H}(\bar{\varphi})(\theta) d \theta \\
    & =\int_{-\infty}^{t} e^{-\beta(t-\theta)}[\mathbf{H}(\bar{\varphi})(\theta+s)-\mathbf{H}(\bar{\varphi})(\theta)] d \theta
    \end{aligned}
    $$
    ﻿
    By \cref{lemma:monotonicity_H} (ii), we have that $\mathbf{H}(\bar{\varphi})(\theta+s)-\mathbf{H}(\bar{\varphi})(\theta) \geq 0$ and therefore $x_{1}(t+$ $s)-x_{1}(t) \geq 0$. It remains to prove that given $s>0$, $e^{\beta t}\left[x_{1}(t+s)-x_{1}(t)\right]$ is non-decreasing in $t \in \mathbb{R}$. Indeed,
    ﻿
    $$
    \begin{aligned}
    e^{\beta t}\left[x_{1}(t+s)-x_{1}(t)\right] & =e^{\beta t} \int_{-\infty}^{t} e^{-\beta(t-\theta)}[\mathbf{H}(\bar{\varphi})(\theta+s)-\mathbf{H}(\bar{\varphi})(\theta)] d \theta \\
    & =\int_{-\infty}^{t} e^{\beta \theta}[\mathbf{H}(\bar{\varphi})(\theta+s)-\mathbf{H}(\bar{\varphi})(\theta)] d \theta
    \end{aligned}
    $$
    ﻿
    Then, again by  \cref{lemma:monotonicity_H} (ii) we obtain that
    ﻿
    $$
    \frac{d}{d t}\left(e^{\beta t}\left[x_{1}(t+s)-x_{1}(t)\right]\right)=e^{\beta t}[\mathbf{H}(\bar{\varphi})(t+s)-\mathbf{H}(\bar{\varphi})(t)] \geq 0
    $$
    ﻿
    showing  (ii).\\
    To prove (iii), let $w(t)=\bar{\varphi}(t)-x_{1}(t), t \in \mathbb{R}$. It follows from \eqref{Hdef} and the fact that $\bar{\varphi}$ is a upper solution of \eqref{edofuncional} that $\bar{\varphi}^{\prime}(t) \geq-\beta \bar{\varphi}(t)+\mathbf{H}(\bar{\varphi})(t), t \in \mathbb{R}$. Combining this with \eqref{edoit}, we obtain
    ﻿
    $$
    w^{\prime}(t)+\beta w(t) \geq 0, \quad t \in \mathbb{R}
    $$
    ﻿
    Let $r(t)=w^{\prime}(t)+\beta w(t), t \in \mathbb{R}$ and  note that $r$ is essentially bounded. From the fundamental theory of first-order linear ordinary differential equations, we get
    ﻿
    $$
    w(t)=c e^{-\beta t}+\int_{-\infty}^{t} e^{-\beta(t-s)} r(s) d s, \quad t \in \mathbb{R}
    $$
    ﻿
    where $c$ is a constant. Now note that $$\lim\limits_{t \rightarrow-\infty} w(t)=\lim\limits _{t \rightarrow-\infty} x_{1}(t)-\lim\limits _{t \rightarrow-\infty} \bar{\varphi}(t)=0.$$
    ﻿
    ﻿
     Therefore, we must have $c=0$. Thus, since $r(t) \geq 0, t \in \mathbb{R}$,
    ﻿
    \begin{equation}\label{diferencia}
    w(t)=\bar{\varphi}(t)-x_{1}(t)=\int_{-\infty}^{t} e^{-\beta(t-s)} r(s) d s \geq 0, \quad t \in \mathbb{R} 
    \end{equation}
    ﻿
    This proves that $x_{1}(t) \leq \bar{\varphi}(t), t \in \mathbb{R}$.\\
    Similarly defining $v(t)=x_{1}(t)-\underline{\varphi}(t), t \in \mathbb{R}$ we have, by \cref{lemma:monotonicity_H} (iii) and hypotheses (C1) and (C3), that
    ﻿
    $$
    \begin{aligned}
    v^{\prime}(t) & =x_{1}^{\prime}(t)-\underline{\varphi}^{\prime}(t) \\
    & =-\beta x_{1}(t)+\mathbf{H}(\bar{\varphi})(t)+\beta \underline{\varphi}(t)-\mathbf{H}(\underline{\varphi})(t) \\
    & =-\beta v(t)+[\mathbf{H}(\bar{\varphi})(t)-\mathbf{H}(\underline{\varphi})(t)] \\
    & \geq-\beta v(t), \quad t \in \mathbb{R} .
    \end{aligned}
    $$
    ﻿
    Let $h(t)=v^{\prime}(t)+\beta v(t), t \in \mathbb{R}$. Then $h(t) \geq 0, t \in \mathbb{R}$ and $h$ is essentially bounded. Thus,
    ﻿
    $$
    v(t)=c e^{-\beta t}+\int_{-\infty}^{t} e^{-\beta(t-s)} h(s) d s, \quad t \in \mathbb{R}
    $$
    ﻿Note that $\lim_{t\rightarrow 0} v(t)=0$, since $0\leq \underline{\varphi}(t) \leq \overline{\varphi}(t)$, $ t\in \mathbb{R}$. This fact and the boundedness of $h$ allows us to deduce that $c=0$ must hold. Then,
    ﻿
    \begin{equation}\label{difsub}
    v(t)=x_{1}(t)-\underline{\varphi}(t)=\int_{-\infty}^{t} e^{-\beta(t-s)} h(s) d s \geq 0, \quad t \in \mathbb{R} 
    \end{equation}
    ﻿
    This completes the proof.
    ﻿
    \end{proof}
    ﻿
    \begin{lemma}\label{lemmaSS} Let $x_{1}$ be as in \cref{lemaop}. Then
    \begin{enumerate}
    \item[(i)] $e^{\beta t}\left[\bar{\varphi}(t)-x_{1}(t)\right]$ is non-decreasing in $t \in \mathbb{R}$,
    \item[(ii)] $e^{\beta t}\left[x_{1}(t)-\underline{\varphi}(t)\right]$ is non-decreasing in $t \in \mathbb{R}$,
    \item[(iii)] $x_{1}$ is a upper solution of \eqref{edofuncional}.
    \end{enumerate}
    \end{lemma}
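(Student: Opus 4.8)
The plan is to extract (i) and (ii) almost for free from the integral identities already established in the proof of \cref{lemaop}, and then to bootstrap (iii) from (i) by invoking the exponential quasi-monotonicity hypothesis (H2).

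First I would revisit \eqref{diferencia}: there one has $w(t):=\bar{\varphi}(t)-x_{1}(t)=\int_{-\infty}^{t}e^{-\beta(t-s)}r(s)\,ds$ with $r(t)=w'(t)+\beta w(t)\ge 0$ essentially bounded. Multiplying through by $e^{\beta t}$ gives $e^{\beta t}w(t)=\int_{-\infty}^{t}e^{\beta s}r(s)\,ds$, an absolutely continuous function on compacta whose a.e.\ derivative is $e^{\beta t}r(t)\ge 0$; hence $e^{\beta t}[\bar{\varphi}(t)-x_{1}(t)]$ is non-decreasing, which is (i). For (ii) I would repeat the argument verbatim with \eqref{difsub}, i.e.\ $v(t)=x_{1}(t)-\underline{\varphi}(t)=\int_{-\infty}^{t}e^{-\beta(t-s)}h(s)\,ds$ with $h=v'+\beta v\ge 0$ essentially bounded, so that $\frac{d}{dt}\big(e^{\beta t}[x_{1}(t)-\underline{\varphi}(t)]\big)=e^{\beta t}h(t)\ge 0$.

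For (iii) I would first record that $x_{1}\in C^{1}(\mathbb{R},\mathbb{R})$ and that it solves \eqref{edoit} pointwise: indeed $t\mapsto\mathbf{H}(\bar{\varphi})(t)=f(\bar{\varphi}_{t})+\beta\bar{\varphi}(t)$ is continuous because $t\mapsto\bar{\varphi}_{t}$ is continuous into $C([-\tau,0],\mathbb{R})$ and $f$ is continuous; moreover $0\le x_{1}\le K$ by \cref{lemaop}(iii) and (C1), and $\mathbf{H}(\bar{\varphi})$ is bounded since by \cref{lemma:monotonicity_H}(ii) it is monotone with limits $0$ and $\beta K$, so $x_{1}'=-\beta x_{1}+\mathbf{H}(\bar{\varphi})$ is essentially bounded, as an upper solution is required to be. Substituting $\mathbf{H}(\bar{\varphi})(t)=f(\bar{\varphi}_{t})+\beta\bar{\varphi}(t)$ into $x_{1}'(t)=-\beta x_{1}(t)+\mathbf{H}(\bar{\varphi})(t)$ I obtain
\[
-x_{1}'(t)+f\big((x_{1})_{t}\big)=\big[f\big((x_{1})_{t}\big)-f(\bar{\varphi}_{t})\big]+\beta\big[x_{1}(t)-\bar{\varphi}(t)\big],
\]
so it remains to show this is $\le 0$, i.e.\ to apply (H2) with $\phi=\bar{\varphi}_{t}$ and $\psi=(x_{1})_{t}$. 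The ordering $0\le (x_{1})_{t}(s)\le\bar{\varphi}_{t}(s)\le K$ on $[-\tau,0]$ is clear from $0\le\underline{\varphi}\le x_{1}\le\bar{\varphi}\le K$. The remaining requirement of (H2), that $s\mapsto e^{\beta s}\big[\bar{\varphi}_{t}(s)-(x_{1})_{t}(s)\big]=e^{-\beta t}e^{\beta(t+s)}\big[\bar{\varphi}(t+s)-x_{1}(t+s)\big]$ be non-decreasing on $[-\tau,0]$, is exactly part (i) above, rescaled by the positive constant $e^{-\beta t}$. Then (H2) yields $f(\bar{\varphi}_{t})-f\big((x_{1})_{t}\big)+\beta[\bar{\varphi}(t)-x_{1}(t)]\ge 0$, which is precisely $-x_{1}'(t)+f((x_{1})_{t})\le 0$; hence $x_{1}$ is an upper solution of \eqref{edofuncional}.

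The one step that needs a little care — and where I expect the only genuine bookkeeping — is the passage in (iii) from the monotonicity statement of (i), phrased in the global running variable $t\in\mathbb{R}$, to condition (ii) of (H2), phrased in the segment variable $s\in[-\tau,0]$ with a frozen base point $t$; the substitution $u=t+s$ together with positivity of $e^{-\beta t}$ handles it, but it should be written out explicitly. Everything else — the regularity and boundedness of $x_{1}$ that make it an admissible upper solution, and the two one-line differentiations establishing (i) and (ii) — is routine given \cref{lemaop} and \cref{lemma:monotonicity_H}.
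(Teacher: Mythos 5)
Your proposal is correct and follows essentially the same route as the paper: parts (i) and (ii) are obtained by differentiating the integral identities \eqref{diferencia} and \eqref{difsub} from the proof of \cref{lemaop}, and part (iii) by comparing $x_{1}'$ with $f((x_{1})_{t})$ via the quasi-monotonicity of $\mathbf{H}$. The only cosmetic difference is that you apply (H2) directly where the paper cites \cref{lemma:monotonicity_H}(iii) (which is just (H2) repackaged), and you make explicit the dependence of (iii) on (i) that the paper leaves implicit.
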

    \begin{proof} 
    To prove (i), we use the equality \eqref{diferencia} from the proof of \cref{lemaop} to obtain
    ﻿
    $$
    e^{\beta t}\left[\bar{\varphi}(t)-x_{1}(t)\right]=e^{\beta t} \int_{-\infty}^{t} e^{-\beta(t-s)} r(s) d s=\int_{-\infty}^{t} e^{\beta s} r(s) d s, \quad t \in \mathbb{R},
    $$
    ﻿
    from which
    ﻿
    $$
    \frac{d}{d t}\left(e^{\beta t}\left[\bar{\varphi}(t)-x_{1}(t)\right]\right)=e^{\beta t} r(t) \geq 0, \quad t \in \mathbb{R}.
    $$
    ﻿
    Its proves that $e^{\beta t}\left[\bar{\varphi}(t)-x_{1}(t)\right]$ is non-decreasing in $t \in \mathbb{R}$. The proof of (ii) is completely analogous, using equality \eqref{difsub}.\\
    Finally, to prove (iii), from equation \eqref{edoit} and     \cref{lemma:monotonicity_H} (iii), we obtain
    ﻿
    $$
    \begin{aligned}
    x_{1}^{\prime}(t) & =-\beta x_{1}(t)+\mathbf{H}(\bar{\varphi})(t) \\
    & =f\left(x_{1 t}\right)+\left[\mathbf{H}(\bar{\varphi})(t)-\mathbf{H}\left(x_{1}\right)(t)\right] \\
    & \geq f\left(x_{1 t}\right), \quad t \in \mathbb{R},
    \end{aligned}
    $$
    ﻿
    that is, $-x_{1}^{\prime}(t)+f\left(x_{1 t}\right) \leq 0$. This completes the proof.
    \end{proof}
    ﻿
    Since $x_{1}$ is also a upper solution, we can replace $\bar{\varphi}$ by $x_{1}$ and consider the following linear non-homogeneous ordinary differential equation:
    ﻿
    $$
    x_{2}^{\prime}(t)=-\beta x_{2}(t)+\mathbf{H}\left(x_{1}\right)(t), \quad t \in \mathbb{R} .
    $$
    ﻿
    In this way we can repeat the previous procedure. In general, we consider the following iterative scheme:
    ﻿
    ﻿
    ﻿
    ﻿
    ﻿
    \begin{empheq}[left=\empheqlbrace]{align}
    x_{m}^{\prime}(t)&=-\beta x_{m}(t)+\mathbf{H}\left(x_{m-1}\right)(t), \qquad t \in \mathbb{R}, m=1,2, \ldots  \label{esquema}\\[0.2cm]
    x_{0}(t)&=\bar{\varphi}(t), \qquad t \in \mathbb{R}.\nonumber
    \end{empheq}
    ﻿
    ﻿
    ﻿
    By solving \eqref{esquema} inductively and repeating the same argument for $x_{1}$, we obtain a sequence of functions $\left\{x_{m}\right\}_{m=1}^{\infty}$ with the following properties:
    \begin{description}
    \item[(P1)] $x_{m} \in \Gamma, m=1,2, \ldots$
    \item[(P2)] $0 \leq \underline{\varphi}(t) \leq x_{m}(t) \leq x_{m-1}(x) \leq \bar{\varphi}(t), \in \mathbb{R}, m=1,2, \ldots$,
    \item[(P3)] $e^{\beta t}\left[x_{m-1}(t)-x_{m}(t)\right]$ is non-decreasing in $\mathbb{R}$,
    \item[(P4)] $e^{\beta t}\left[x_{m}(t)-\underline{\varphi}(t)\right]$ is non-decreasing in $\mathbb{R}$.
    \end{description}
    From property (P2) it follows that $\lim\limits_{m \rightarrow \infty} x_{m}(t)=x(t), t \in \mathbb{R}$, exists and $\underline{\varphi}(t) \leq x(t) \leq$ $\bar{\varphi}(t)$. Moreover, $x$ is non-decreasing, because each $x_{m}$ is. Next we prove that $x$ is a solution of \eqref{edofuncional}.
    ﻿
    \begin{lemma} \label{lemalim} Let $x(t)=\lim\limits_{m \rightarrow \infty} x_{m}(t)$, hence $x(t)$ is a solution of \eqref{edofuncional} and it satisfies
    ﻿
    $$
    \lim _{t \to-\infty} x(t)=0 \text{ and } \lim _{t \to+\infty} x(t)=K.
    $$
    \end{lemma}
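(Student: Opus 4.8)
The plan is to turn the pointwise, monotone convergence $x_m\downarrow x$ into convergence strong enough to pass to the limit in the integral representation of \cref{lemaop} (applied with $\bar{\varphi}$ replaced by $x_{m-1}$), and then to read off the two limits of $x$ from (H1). First I would record a uniform Lipschitz bound for the whole sequence. Since $x_0=\bar{\varphi}$ and each $e^{\beta t}[x_{m-1}(t)-x_m(t)]$ is non-decreasing by (P3), a telescoping sum shows $e^{\beta t}[\bar{\varphi}(t)-x_{m-1}(t)]$ is non-decreasing; combined with $0\le x_{m-1}(t)\le\bar{\varphi}(t)\le K$ from (P2) and $\bar{\varphi}\in\Gamma$, \cref{lemma:monotonicity_H}(iii) yields $0\le \mathbf{H}(x_{m-1})(t)\le \mathbf{H}(\bar{\varphi})(t)$ for all $t\in\mathbb{R}$ and $m\ge 1$. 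As $\bar{\varphi}$ is Lipschitz (its derivative is essentially bounded) and $f$ is continuous, $\mathbf{H}(\bar{\varphi})(t)=f(\bar{\varphi}_t)+\beta\bar{\varphi}(t)$ is continuous with finite limits $0$ and $\beta K$, hence bounded by some $M_0$; therefore $|x_m'(t)|=|-\beta x_m(t)+\mathbf{H}(x_{m-1})(t)|\le \beta K+M_0$ uniformly in $m$ and $t$. Hence $\{x_m\}$ is equi-Lipschitz, the limit $x$ is Lipschitz (in particular continuous), and by Dini's theorem the convergence $x_m\to x$ is uniform on compact subsets of $\mathbb{R}$.

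Next I would pass to the limit. By \cref{lemaop}, $x_m(t)=\int_{-\infty}^{t}e^{-\beta(t-s)}\mathbf{H}(x_{m-1})(s)\,ds$. Uniform convergence on compacts gives $(x_{m-1})_s\to x_s$ in $C([-\tau,0],\mathbb{R})$ for each fixed $s$, so $\mathbf{H}(x_{m-1})(s)\to \mathbf{H}(x)(s)$ pointwise by continuity of $f$; since $0\le \mathbf{H}(x_{m-1})(s)\le M_0$, dominated convergence (with dominating function $s\mapsto M_0 e^{-\beta(t-s)}$ on $(-\infty,t]$) yields $x(t)=\int_{-\infty}^{t}e^{-\beta(t-s)}\mathbf{H}(x)(s)\,ds$, i.e. $e^{\beta t}x(t)=\int_{-\infty}^{t}e^{\beta s}\mathbf{H}(x)(s)\,ds$. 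The right-hand side is $C^1$ because $\mathbf{H}(x)$ is continuous, so differentiating gives $x'(t)=-\beta x(t)+\mathbf{H}(x)(t)=f(x_t)$ by \eqref{Hdef}; thus $x\in C^1(\mathbb{R},\mathbb{R})$ and $-x'(t)+f(x_t)=0$, so $x$ solves \eqref{edofuncional}.

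For the asymptotics, the limit at $-\infty$ is immediate from the squeeze $0\le\underline{\varphi}(t)\le x(t)\le\bar{\varphi}(t)$ together with $\bar{\varphi}(-\infty)=0$. At $+\infty$, $x$ is non-decreasing and bounded above by $K$, so $L:=\lim_{t\to+\infty}x(t)$ exists with $0<L\le K$; positivity holds because $x(t)\ge\underline{\varphi}(t_0)>0$ for $t\ge t_0$, where $t_0$ is any point with $\underline{\varphi}(t_0)>0$, which exists by (C2). Since $x$ is Lipschitz, $t\mapsto x_t$ is uniformly continuous into $C([-\tau,0],\mathbb{R})$ with relatively compact range, and $f$ is uniformly continuous on that compact set, so $t\mapsto x'(t)=f(x_t)$ is uniformly continuous on $\mathbb{R}$; a bounded monotone $C^1$ function with uniformly continuous derivative satisfies $x'(t)\to 0$ as $t\to+\infty$ (Barbalat's lemma). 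Moreover $x_t\to\widehat{L}$ in $C([-\tau,0],\mathbb{R})$ by the squeeze $x(t-\tau)\le x(t+s)\le x(t)$, so continuity of $f$ gives $f(\widehat{L})=\lim_{t\to+\infty}x'(t)=0$; by (H1) this forces $L\in\{0,K\}$, and since $L>0$ we conclude $L=K$.

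The main obstacle is the behavior at $+\infty$: the direct squeeze used at $-\infty$ is unavailable there, since the hypotheses only give $\underline{\varphi}\not\equiv 0$ rather than $\underline{\varphi}(+\infty)=K$, and a pointwise monotone limit of profiles each tending to $K$ need not tend to $K$. Closing this gap is exactly what forces the regularity package of the first step — the uniform Lipschitz bound and the relative compactness of $\{x_t:t\in\mathbb{R}\}$ in $C([-\tau,0],\mathbb{R})$ — which makes $x'$ uniformly continuous so that $x'(t)\to 0$, after which (H1) identifies the limit. A secondary point requiring care is the passage to the limit under the improper integral, which is handled by the uniform bound $\mathbf{H}(x_{m-1})\le\mathbf{H}(\bar{\varphi})\le M_0$ together with dominated convergence.
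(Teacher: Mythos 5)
Your proof is correct and follows the same route as the paper's: pass to the limit in the integral representation $x_m(t)=\int_{-\infty}^{t}e^{-\beta(t-s)}\mathbf{H}(x_{m-1})(s)\,ds$ by dominated convergence, differentiate to recover \eqref{edofuncional}, squeeze at $-\infty$ using (P2), and at $+\infty$ identify $f(\widehat{K_0})=0$ and invoke (H1). The only deviations are matters of rigor and device: you supply the uniform Lipschitz bound, the telescoped (P3) monotonicity giving $0\le\mathbf{H}(x_{m-1})\le\mathbf{H}(\bar{\varphi})$, and the Dini argument needed to justify $\mathbf{H}(x_{m-1})(s)\to\mathbf{H}(x)(s)$ (a step the paper asserts directly from continuity of $f$), and at $+\infty$ you use Barbalat's lemma on $x'(t)=f(x_t)$ where the paper applies l'H\^{o}pital to the integral formula --- both yield the same conclusion.
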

    ﻿
    ﻿
    \begin{proof} It follows from the continuity of $f$ and Lebesgue's Dominated Convergence Theorem that
    ﻿
    \begin{align}
    x(t)=\lim _{m \rightarrow \infty} x_{m}(t) & =\lim _{m \rightarrow \infty} \int_{-\infty}^{t} e^{-\beta(t-s)} \mathbf{H}\left(x_{m-1}\right)(s) d s\nonumber \\
    & =\int_{-\infty}^{t} e^{\beta(t-s)} \lim _{m \rightarrow \infty} \mathbf{H}(x_{m-1})(s) d s\label{limite}  \\
    & =\int_{-\infty}^{t} e^{-\beta(t-s)} \mathbf{H}(x)(s) d s\nonumber
    \end{align}
    ﻿
    Then,
    ﻿
    $$
    \begin{aligned}
    x^{\prime}(t) & =-\beta \int_{-\infty}^{t} e^{-\beta(t-s)} \mathbf{H}(x)(s) d s+e^{-\beta t} e^{\beta t} \mathbf{H}(x)(t) \\
    & =-\beta x(t)+\mathbf{H}(x)(t), \quad t \in \mathbb{R} .
    \end{aligned}
    $$
    ﻿
    Therefore,
    ﻿
    $$
    x^{\prime}(t)=-\beta x(t)+f\left(x_{t}\right)+\beta x(t), \quad t \in \mathbb{R},
    $$
    ﻿
    that is
    ﻿
    $$
    -x^{\prime}(t)+f\left(x_{t}\right)=0, \quad t \in \mathbb{R}.
    $$
    ﻿
    Thus, $x$ is a solution of \eqref{edofuncional}.\\
    Now from (P2) and the fact that $\lim\limits_{t \to-\infty} \bar{\varphi}(t)=0$, we obtain that $\lim\limits_{t \to-\infty} x(t)=0$.  On the other hand, $x$ is non-decreasing and bounded above by $K$. Therefore $\lim\limits _{t \to+\infty} x(t)=K_{0}$ exists and
    ﻿
    $$
    \sup _{t \in \mathbb{R}} \underline{\varphi}(t) \leq K_{0} \leq K
    $$
    ﻿
    It follows from hypotheses (C1) and (C2) that $K_{0} \in(0, K]$. Using equality \eqref{limite}, it follows from l'Hôpital's rule and the continuity of $f$ that
    ﻿
    $$
    \begin{aligned}
    K_{0}=\lim _{t \rightarrow+\infty} x(t) & =\lim _{t \rightarrow+\infty} \frac{1}{e^{\beta t}} \int_{-\infty}^{t} e^{\beta s} \mathbf{H}(x)(s) d s \\
    & =\lim _{t \rightarrow+\infty} \frac{\mathbf{H}(x)(t)}{\beta} \\
    & =\frac{f\left(\widehat{K_{0}}\right)+\beta K_{0}}{\beta} \\
    & =\frac{f\left(\widehat{K_{0}}\right)}{\beta}+K_{0},
    \end{aligned}
    $$
    ﻿
    from which, $f\left(\widehat{K_{0}}\right)=0$. However, by $(\mathrm{H}1)$ we conclude that $K_{0}=K$. This completes the proof.
    \end{proof}
    Lemmas \ref{lemaop}, \ref{lemmaSS} and \ref{lemalim} are summarized in the following result.\\
    \begin{theorem}\label{principal} Suppose (H1) and (H2) are satisfied and that equation \eqref{edofuncional} has a upper solution $\bar{\varphi} \in \Gamma$ and a lower solution $\underline{\varphi}$ satisfying (C1)-(C3). Then \eqref{edofuncional} has a monotone solution satisfying $u(-\infty)=0$ and $u(+\infty)=K$.
    \end{theorem}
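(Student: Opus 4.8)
The plan is to run the monotone iteration scheme \eqref{esquema} starting from $x_{0}=\bar{\varphi}$ and to show by induction on $m$ that each $x_{m}$ inherits exactly the structural properties that make Lemmas \ref{lemaop} and \ref{lemmaSS} applicable again at the next stage. Concretely, I would establish by induction the four properties (P1)--(P4): that $x_{m}\in\Gamma$; that $0\le\underline{\varphi}(t)\le x_{m}(t)\le x_{m-1}(t)\le\bar{\varphi}(t)$ for $t\in\mathbb{R}$; that $e^{\beta t}[x_{m-1}(t)-x_{m}(t)]$ is non-decreasing; and that $e^{\beta t}[x_{m}(t)-\underline{\varphi}(t)]$ is non-decreasing. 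The base case $m=1$ is precisely the content of Lemmas \ref{lemaop} and \ref{lemmaSS}, using the standing hypotheses that $\bar{\varphi}\in\Gamma$ is an upper solution, $\underline{\varphi}$ is a lower solution with $\underline{\varphi}\not\equiv0$, and (C1)--(C3) hold.

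For the inductive step, suppose $x_{m-1}$ now plays the role previously played by $\bar{\varphi}$: by the induction hypothesis $x_{m-1}\in\Gamma$, $x_{m-1}$ is an upper solution of \eqref{edofuncional} (this is Lemma \ref{lemmaSS}(iii) at the previous stage), and the pair $(x_{m-1},\underline{\varphi})$ satisfies (C1)--(C3) — here (C1) and (C3) are (P2) and (P4) at stage $m-1$, while (C2) is automatic since $\underline{\varphi}\not\equiv0$ is a hypothesis independent of the iteration. Then Lemma \ref{lemaop}, applied with $\bar{\varphi}$ replaced by $x_{m-1}$, shows that $x_{m}$ — defined by the integral representation analogous to \eqref{operador} — solves \eqref{esquema}, belongs to $\Gamma$, and satisfies $\underline{\varphi}(t)\le x_{m}(t)\le x_{m-1}(t)$ (whence $x_{m}(t)\le\bar{\varphi}(t)$ by the telescoping chain $x_{m}\le x_{m-1}\le\cdots\le x_{1}\le\bar{\varphi}$). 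Likewise Lemma \ref{lemmaSS}, applied at this stage, gives that $e^{\beta t}[x_{m-1}(t)-x_{m}(t)]$ and $e^{\beta t}[x_{m}(t)-\underline{\varphi}(t)]$ are non-decreasing and that $x_{m}$ is itself an upper solution, which closes the induction.

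Once (P1)--(P4) are established, property (P2) shows that for each fixed $t$ the sequence $\{x_{m}(t)\}_{m\ge1}$ is non-increasing in $m$ and bounded below by $\underline{\varphi}(t)\ge0$, so $x(t):=\lim_{m\to\infty}x_{m}(t)$ exists and satisfies $\underline{\varphi}(t)\le x(t)\le\bar{\varphi}(t)$; moreover $x$ is non-decreasing in $t$, being a pointwise limit of non-decreasing functions. Finally I would invoke Lemma \ref{lemalim}, whose argument uses only the integral representation \eqref{limite}, the continuity of $f$, and Lebesgue's dominated convergence theorem (with dominating function $e^{-\beta(t-s)}\mathbf{H}(\bar{\varphi})(s)$ on $(-\infty,t]$, which is integrable since $\mathbf{H}(\bar{\varphi})$ is bounded by (H1) together with $\bar{\varphi}\in\Gamma$), to conclude that $x$ solves \eqref{edofuncional} and that $x(-\infty)=0$, $x(+\infty)=K$. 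Setting $u=x$ completes the proof.

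The main obstacle is purely the bookkeeping of the inductive step: one must verify that at every stage the full triple of conditions making Lemmas \ref{lemaop} and \ref{lemmaSS} applicable is genuinely reproduced — in particular that $x_{m-1}$, and not merely $\bar{\varphi}$, is an upper solution, and that the comparison with the \emph{fixed} lower solution $\underline{\varphi}$ still satisfies (C1) and (C3). Everything else is a direct citation of the lemmas already proved, together with the elementary fact that a non-increasing sequence of non-decreasing functions bounded below converges pointwise to a non-decreasing limit.
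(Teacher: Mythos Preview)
Your proposal is correct and follows essentially the same approach as the paper: the paper presents Theorem~\ref{principal} simply as the summary of Lemmas~\ref{lemaop}, \ref{lemmaSS} and \ref{lemalim} together with the inductive passage to (P1)--(P4), which it dispatches with the phrase ``repeating the same argument for $x_{1}$.'' You have spelled out that induction more carefully --- in particular, the verification that $x_{m-1}$ inherits the upper-solution property and that the pair $(x_{m-1},\underline{\varphi})$ still satisfies (C1)--(C3) --- but the strategy and the ingredients are identical.
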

    \section{Application to the Nicholson Model}
    ﻿
    In this section, we prove the existence of a heteroclinic connection for the Nicholson model with  harvesting term given in \eqref{NichH}. We start checking the quasi-monotonicity condition (H2) and after, we construct the upper and lower solutions, in order to apply the method presented in the last section. 
    In what follows, we assume:
    \begin{description}
    \item[(A1)] $1<\frac{p}{\delta+H} \leq e$;
    \item[(A2)] $0<\sigma<r$.
    \end{description}
    ﻿
    ﻿
    \subsection*{3.1. Quasi-Monotonicity Condition}
    we claim that the functional $f$ given by
    \begin{equation}\label{fun}
    f(\phi)=-\delta \phi(0)-H\phi(-\sigma)+\rho \phi(-r)e^{-\phi(-r)},\quad \phi\in C([-r,0],\R),
    \end{equation} 
    satisfies the exponential quasi-monotonicity condition (H2). Indeed it is show in the following 
    ﻿
    \begin{lemma}\label{lemaH2} Assuming  (A1), for any $\beta>\delta+H$ and $\sigma$  sufficiently small, the functional $f$ satisfies condition (H2).
    \end{lemma}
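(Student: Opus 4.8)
The plan is to expand the quasi\-monotonicity expression explicitly and then exploit two structural features of $f$: the harvesting term is linear, and the Ricker nonlinearity $g(u):=ue^{-u}$ is monotone on the range where the admissible functions live. Write $K:=\ln\!\bigl(\rho/(\delta+H)\bigr)$, so that by (A1) one has $0<K\le 1$. Fix $\beta>\delta+H$. Given admissible $\phi,\psi$ — that is, $0\le\psi(s)\le\phi(s)\le K$ on $[-\tau,0]$ (here $\tau=r$ by (A2)) and $e^{\beta s}(\phi(s)-\psi(s))$ non\-decreasing on $[-\tau,0]$ — set $\Delta(s):=\phi(s)-\psi(s)\ge 0$ and compute directly from \eqref{fun} that
\[
f(\phi)-f(\psi)+\beta[\phi(0)-\psi(0)]
=(\beta-\delta)\,\Delta(0)-H\,\Delta(-\sigma)+\rho\bigl[g(\phi(-r))-g(\psi(-r))\bigr].
\]

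Next I would dispose of the nonlinear term. Since $g'(u)=(1-u)e^{-u}\ge 0$ for $u\le 1$ and $0\le\psi(-r)\le\phi(-r)\le K\le 1$, the bracket $g(\phi(-r))-g(\psi(-r))$ is nonnegative, so this term only helps and may be dropped. The sole obstruction is then the delayed harvesting increment $-H\Delta(-\sigma)\le 0$. This is exactly where condition (ii) of (H2) must be used: because $e^{\beta s}\Delta(s)$ is non\-decreasing on $[-\tau,0]$, comparing its values at $s=-\sigma$ and $s=0$ gives $e^{-\beta\sigma}\Delta(-\sigma)\le\Delta(0)$, i.e. $\Delta(-\sigma)\le e^{\beta\sigma}\Delta(0)$. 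Substituting,
\[
f(\phi)-f(\psi)+\beta[\phi(0)-\psi(0)]\ \ge\ \bigl[(\beta-\delta)-H e^{\beta\sigma}\bigr]\,\Delta(0).
\]

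It remains to make the coefficient nonnegative. Since $\beta>\delta+H$ forces $\beta-\delta>H>0$, the quantity $\frac1\beta\ln\!\bigl(\frac{\beta-\delta}{H}\bigr)$ is strictly positive, and for every $\sigma\in\bigl(0,\tfrac1\beta\ln\tfrac{\beta-\delta}{H}\bigr]$ one has $He^{\beta\sigma}\le\beta-\delta$, whence the right\-hand side above is $\ge 0$ and (H2) holds with this $\beta$. The argument is elementary once the right move is spotted; the only genuinely delicate point — and the step I expect to be the crux — is recognizing that hypothesis (ii) of (H2) has to be invoked precisely at the point $s=-\sigma$ in order to trade the uncontrolled delayed increment $\Delta(-\sigma)$ for a constant multiple of $\Delta(0)$. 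Without that observation the harvesting term cannot be absorbed, and the smallness condition on $\sigma$ is exactly what keeps that multiplier below $\beta-\delta$.
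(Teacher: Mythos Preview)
Your proof is correct and follows essentially the same approach as the paper: drop the Ricker term by monotonicity of $ue^{-u}$ on $[0,1]$, use the non-decreasing property of $e^{\beta s}\Delta(s)$ at $s=-\sigma$ versus $s=0$ to bound $\Delta(-\sigma)\le e^{\beta\sigma}\Delta(0)$, and then require $He^{\beta\sigma}\le\beta-\delta$. Your version is in fact slightly sharper in that you make the smallness threshold on $\sigma$ explicit, namely $\sigma\le\tfrac{1}{\beta}\ln\!\bigl(\tfrac{\beta-\delta}{H}\bigr)$, whereas the paper leaves it as ``$\sigma$ sufficiently small.''
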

    ﻿
    \begin{proof} Let $\phi, \psi \in C([-\tau, 0], \mathbb{R})$ such that $0 \leq \psi(s) \leq \phi(s) \leq \kappa$ and $e^{\beta s}(\phi(s)-\psi(s))$ is non-decreasing for all $s \in[-\tau, 0]$. Then,
    ﻿
    \begin{equation}\label{ineq}
    \phi(-\sigma)-\psi(-\sigma) \leq e^{\beta \sigma}(\phi(0)-\psi(0)), 
    \end{equation}
    ﻿
    and
    ﻿
    \begin{align}
    f(\phi)-f(\psi)= & -\delta(\phi(0)-\psi(0))-H(\phi(-\sigma)-\psi(-\sigma)) \nonumber \\
    & +\rho\left(\phi(-r) e^{-\phi(-r)}-\psi(-r) e^{-\psi(-r)}\right) \label{qmon}
    \end{align}
    ﻿
    Since $1<\frac{\rho}{\delta+H} \leq e$, then
    ﻿
    $$
    0 \leq \psi(s) \leq \phi(s) \leq 1
    $$
    ﻿
    Moreover, since the function $h(y)=y e^{-y}, y \in \mathbb{R}$ is increasing on $[0,1]$, it follows that:
    ﻿
    $$
    \phi(-r) e^{-\phi(-r)}-\psi(-r) e^{-\psi(-r)}>0
    $$
    ﻿
    Therefore, from \eqref{qmon} we have:
    ﻿
    $$
    f(\phi)-f(\psi) \geq-\delta(\phi(0)-\psi(0))-H e^{\beta \sigma}(\phi(0)-\psi(0)) .
    $$
    ﻿
    It follows from \eqref{ineq} and the above inequality that:
    ﻿
    $$
    f(\phi)-f(\psi)+\beta(\phi(0)-\psi(0)) \geq\left(\beta-\delta-H e^{\beta \sigma}\right)(\phi(0)-\psi(0)),
    $$
    ﻿
    which implies that (H2) is satisfied whenever $e^{\beta \sigma} \leq(\beta-\delta) / H$, which will hold for any $\beta>\delta+H$ when $\sigma$ is sufficiently small.
    \end{proof}
    ﻿
    \subsection*{3.2. Existence of Upper and Lower Solutions}
    Now, we propose a pair of upper and lower solutions for equation \eqref{NichH}. Using elementary calculus techniques, we can prove the following technical lemmas:
    ﻿
    \begin{lemma}\label{lema_realroot}
        For equation \eqref{NichH}, the quasi-polynomial characteristic equation around the zero equilibrium:
    ﻿
    \begin{equation*}\label{char0}
    \chi_{0}(z):=-z-\delta-H e^{-\sigma z}+\rho e^{-r z} 
    \end{equation*}
    ﻿
    has a real root $\lambda>0$.
    ﻿
    \end{lemma}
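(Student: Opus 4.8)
\emph{Proof proposal.} The plan is to obtain the root by a direct application of the intermediate value theorem to the continuous function $\chi_{0}:\R\to\R$. First I would note that $\chi_{0}$ is continuous (in fact real-analytic), being the sum of the affine function $-z-\delta$ and the two exponential terms $-He^{-\sigma z}$ and $\rho e^{-rz}$. Next I would evaluate at the origin:
$\chi_{0}(0)=-\delta-H e^{0}+\rho e^{0}=\rho-(\delta+H)$,
which is strictly positive precisely because hypothesis (A1) gives $\rho/(\delta+H)>1$.

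Then I would examine the behaviour of $\chi_{0}(z)$ as $z\to+\infty$. Since $\sigma>0$ and $r>0$, both $He^{-\sigma z}$ and $\rho e^{-rz}$ tend to $0$, while $-\delta$ is constant and the term $-z$ tends to $-\infty$; hence $\chi_{0}(z)\to-\infty$. In particular there exists $z_{1}>0$ with $\chi_{0}(z_{1})<0$. Combining $\chi_{0}(0)>0$, $\chi_{0}(z_{1})<0$, and the continuity of $\chi_{0}$ on $[0,z_{1}]$, the intermediate value theorem produces a point $\lambda\in(0,z_{1})$ with $\chi_{0}(\lambda)=0$, which is the asserted positive real root.

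There is no genuine obstacle in this argument; the only place where (A1) enters is the sign of $\chi_{0}(0)$, and the positivity of the delays is what forces the exponential terms to vanish at $+\infty$. If, in addition, one wants $\lambda$ to be the \emph{unique} positive root (so that the phrasing ``the positive root $\lambda$'' in Theorem~\ref{uno} is justified), I would differentiate: $\chi_{0}'(z)=-1+\sigma H e^{-\sigma z}-r\rho e^{-rz}$, and observe that for $z\ge 0$ one has $e^{-\sigma z}\le 1$ and $-r\rho e^{-rz}\le 0$, so $\chi_{0}'(z)\le -1+\sigma H<0$ whenever $\sigma H<1$. Under the smallness condition on $\sigma$ used in Theorem~\ref{uno} (namely $\sigma\le\sigma_{0}$ with $H\sigma_{0}e^{1+\sigma_{0}\delta}=1$, which forces $H\sigma_{0}<1$) this holds, so $\chi_{0}$ is strictly decreasing on $[0,\infty)$ and the positive root is unique there.
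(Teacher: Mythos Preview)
Your argument is correct and is precisely the ``elementary calculus'' the paper alludes to; in fact the paper does not spell out a proof of this lemma at all, merely prefacing it with the remark that it follows from elementary calculus techniques, so your intermediate-value-theorem computation (positivity at $z=0$ from (A1), divergence to $-\infty$ as $z\to+\infty$) is exactly what is intended. The additional uniqueness observation via $\chi_0'(z)\le -1+\sigma H<0$ under $\sigma\le\sigma_0$ is a nice supplement that the paper does not record.
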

    %
    %\begin{proof}
    %
    %To see this, observe that:
    %
    %$$
    %\lim _{z \rightarrow 0^{+}} \chi_{0}(z)=-\delta-H+\rho>0
    %$$
    %
    %While when $z \rightarrow+\infty, \chi_{0}(z) \rightarrow-\infty$. Since $\chi_{0}$ is continuous, there must exist at least one $\lambda>0$ such that $\chi_{0}(\lambda)=0$.
    %
    %\end{proof}
    ﻿
    \begin{lemma}\label{lema_cond1}
     Let $\sigma_{0}>0$ satisfy the relation:
    ﻿
    \begin{equation}\label{condsigma0}
    1=\sigma_{0} H e^{\left(1+\sigma_{0} \delta\right)} 
    \end{equation}
    ﻿
    Then for any $\sigma \in\left(0, \sigma_{0}\right]$, there exists $\epsilon>0$ such that for $\mu \in\left[\mu_{0}-\epsilon, \mu_{0}+\epsilon\right]$, where $\mu_{0}=\frac{1}{\sigma}+\delta$, the inequality holds:
    ﻿
    \begin{equation}\label{mu0}
    \mu-\delta-H e^{\sigma \mu} \geq 0 
    \end{equation}
    \end{lemma}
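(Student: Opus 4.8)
The plan is to reduce everything to the behaviour of the single real function $g(\mu) := \mu - \delta - H e^{\sigma\mu}$ and to exploit its strict concavity. First I would record that $g \in C^{\infty}(\mathbb{R})$ with $g''(\mu) = -\sigma^{2} H e^{\sigma\mu} < 0$, so $g$ is strictly concave and $g(\mu) \to -\infty$ as $\mu \to \pm\infty$; in particular its superlevel set $\{\mu : g(\mu) \geq 0\}$ is a (possibly empty or degenerate) closed interval. Hence it suffices to show that $\mu_{0} = \tfrac{1}{\sigma} + \delta$ lies in the interior of that interval, i.e. $g(\mu_{0}) > 0$, since then continuity of $g$ at $\mu_0$ immediately yields $\epsilon > 0$ with $g(\mu) \geq 0$ on $[\mu_{0}-\epsilon,\mu_{0}+\epsilon]$ (one may even take $\epsilon$ explicit: if $\mu_- < \mu_0 < \mu_+$ are the two roots of $g$, set $\epsilon = \min\{\mu_0-\mu_-,\mu_+-\mu_0\}$).

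Next I would simply evaluate $g$ at $\mu_0$, using $\sigma\mu_{0} = 1 + \sigma\delta$:
\[
g(\mu_{0}) = \Big(\tfrac{1}{\sigma} + \delta\Big) - \delta - H e^{\,1+\sigma\delta} = \tfrac{1}{\sigma}\Big(1 - \sigma H e^{\,1+\sigma\delta}\Big).
\]
So the sign of $g(\mu_{0})$ is governed by the auxiliary function $\psi(\sigma) := \sigma H e^{\,1+\sigma\delta}$, and the defining relation \eqref{condsigma0} of $\sigma_{0}$ says exactly $\psi(\sigma_{0}) = 1$. Since $\psi'(\sigma) = H e^{\,1+\sigma\delta}(1 + \sigma\delta) > 0$ for $\sigma > 0$, the function $\psi$ is strictly increasing on $(0,\infty)$; hence $\psi(\sigma) \leq \psi(\sigma_{0}) = 1$ for every $\sigma \in (0,\sigma_{0}]$, with strict inequality when $\sigma < \sigma_{0}$. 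Consequently $g(\mu_{0}) \geq 0$ on the whole range $\sigma \in (0,\sigma_{0}]$, and $g(\mu_{0}) > 0$ for $\sigma \in (0,\sigma_{0})$, which (combined with the first paragraph) gives the desired $\epsilon>0$ and inequality \eqref{mu0}.

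The only point that warrants comment is the endpoint $\sigma = \sigma_{0}$, which I expect to be the sole delicate spot in an otherwise routine argument: there $g(\mu_{0}) = 0$ and, since $g'(\mu_{0}) = 1 - \psi(\sigma_{0}) = 0$ together with strict concavity, $\mu_{0}$ is precisely the unique maximizer of $g$; thus \eqref{mu0} holds at $\mu_{0}$ but cannot be propagated to a genuine two-sided neighbourhood. In the application one therefore either keeps $\sigma$ strictly below $\sigma_{0}$ or interprets the statement at $\sigma=\sigma_{0}$ as the limiting (tangency) case; the rest is the elementary calculus the lemma advertises.
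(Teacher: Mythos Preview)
Your argument is correct and follows essentially the same route as the paper: define $g(\mu)=\mu-\delta-He^{\sigma\mu}$, evaluate $g(\mu_0)$ using $\sigma\mu_0=1+\sigma\delta$, compare with the defining relation for $\sigma_0$ via a monotonicity argument to obtain $g(\mu_0)\geq 0$, and conclude by continuity. Your explicit observation that the endpoint $\sigma=\sigma_0$ is a degenerate tangency case in which no genuine two-sided $\epsilon$-neighbourhood exists is a useful refinement that the paper glosses over.
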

    ﻿
    ﻿
    \begin{proof}
    To determine the value of $\sigma_{0}$, consider the point $\mu_{0}$ where the derivative of the function $\mu \rightarrow H^{\sigma_{0} \mu}, \mu \in \mathbb{R}$ coincides with the slope of the line $\mu-\delta$. Then, $\mu_{0}$ must satisfy the equations:
    ﻿
    $$
    \begin{aligned}
    \mu_{0}-\delta & =H e^{\sigma_{0} \mu_{0}} \\
    1 & =\sigma_{0} H e^{\sigma_{0} \mu_{0}}
    \end{aligned}
    $$
    ﻿
    from which $\mu_{0}=\frac{1}{\sigma_{0}}+\delta$ and therefore $\sigma_{0}$ is such that:
    ﻿
    $$
    1=\sigma_{0} H e^{1+\sigma_{0} \delta}
    $$
    ﻿
    Now, consider the function $\mu \rightarrow \mu-\delta-H e^{\sigma \mu}=: g(\mu)$, for $\mu \in \mathbb{R}$. If $\sigma \in\left(0, \sigma_{0}\right]$, take $\mu_{0}:=\frac{1}{\sigma}+\delta$, then:
    ﻿
    $$
    g\left(\mu_{0}\right)=\frac{1}{\sigma}-H e^{\sigma\left(\frac{1}{\sigma}-\delta\right)}>\frac{1}{\sigma}-H e^{1+\sigma_{o} \delta}=\frac{1}{\sigma}-\frac{1}{\sigma_{0}} \geq 0 .
    $$
    ﻿
    By the continuity of the function $g$, there exists a neighborhood of $\mu_{0},\left(\mu_{0}-\epsilon, \mu_{0}+\epsilon\right)$ such that $\mu-\delta-H e^{\sigma \mu} \geq 0$.
    \end{proof}
    ﻿
    Now, in the following lemmas, we construct the upper and lower solutions for \eqref{NichH} and check the conditions (C1), (C2) and (C3). 
    \begin{lemma}\label{supersol} Let $\sigma \in\left(0, \sigma_{0}\right]$ and $\mu$ be as in \cref{lema_cond1}. The function $\bar{\varphi}: \mathbb{R} \rightarrow \mathbb{R}$ defined by:
    ﻿
    \[
    \bar{\varphi}(t):= \begin{cases}\dfrac{\mu}{\mu+\lambda} \kappa e^{\lambda t}, & t \leq 0,  \label{dsuper}\\ & \\ \kappa\left(1-\dfrac{\lambda}{\mu+\lambda} e^{-\mu t}\right), & t>0.\end{cases}
    \]
    ﻿
    is a upper solution for \eqref{NichH}.
    \end{lemma}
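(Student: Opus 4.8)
The plan is to verify directly the defining inequality for an upper solution, i.e.\ $\bar\varphi'(t)\ge f(\bar\varphi_t)$ for a.e.\ $t\in\mathbb{R}$, where by \eqref{fun} $f(\bar\varphi_t)=\rho\,\bar\varphi(t-r)e^{-\bar\varphi(t-r)}-\delta\,\bar\varphi(t)-H\,\bar\varphi(t-\sigma)$. Write $A=\tfrac{\mu}{\mu+\lambda}\kappa$ and $B=\tfrac{\lambda}{\mu+\lambda}\kappa$, so that $A+B=\kappa$ and $\lambda A=\mu B$. First I would record the elementary facts about $\bar\varphi$: the two branches agree in value and in first derivative at $t=0$, so $\bar\varphi\in C^1(\mathbb{R})$ with essentially bounded derivative; and $0<\bar\varphi(t)\le\kappa$ for every $t$, hence (since $\kappa\le1$ by (A1)) every argument of $\bar\varphi$ lies in $[0,1]$, where $h(y):=ye^{-y}$ is nondecreasing and $\rho\,h(y)\le\rho\,h(\kappa)=\rho\kappa e^{-\kappa}=(\delta+H)\kappa$, using $e^{\kappa}=\rho/(\delta+H)$. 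Since $0<\sigma<r$ by (A2), the three arguments are ordered $t-r<t-\sigma<t$, so the branch of $\bar\varphi$ used in $\bar\varphi(t),\bar\varphi(t-\sigma),\bar\varphi(t-r)$ changes only at $t=0,\sigma,r$; accordingly I split $\mathbb{R}$ into the three ranges $t\le0$, $0<t\le\sigma$ and $t>\sigma$.

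On $t\le0$ all three arguments are nonpositive, so $\bar\varphi(s)=Ae^{\lambda s}$ there; substituting and using only the crude bound $e^{-Ae^{\lambda(t-r)}}\le1$ gives
\[
\bar\varphi'(t)-f(\bar\varphi_t)\ \ge\ Ae^{\lambda t}\bigl(\lambda+\delta+He^{-\lambda\sigma}-\rho e^{-\lambda r}\bigr)=0,
\]
by the characteristic identity \eqref{eq6} of \cref{lema_realroot}. On $t>\sigma$ both $\bar\varphi(t)$ and $\bar\varphi(t-\sigma)$ lie on the second branch, so $\bar\varphi(t)=\kappa-Be^{-\mu t}$, $\bar\varphi(t-\sigma)=\kappa-Be^{-\mu(t-\sigma)}$, $\bar\varphi'(t)=\mu Be^{-\mu t}$, while $\bar\varphi(t-r)\in[0,\kappa]$; bounding the recruitment term by $(\delta+H)\kappa$ collapses everything to
\[
\bar\varphi'(t)-f(\bar\varphi_t)\ \ge\ Be^{-\mu t}\bigl(\mu-\delta-He^{\mu\sigma}\bigr)\ \ge\ 0,
\]
which is exactly inequality \eqref{mu0} of \cref{lema_cond1}, available for the chosen $\mu$ near $\mu_0=\tfrac{1}{\sigma}+\delta$.

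The range $0<t\le\sigma$ is the transition region, and I expect it to be the main obstacle: here $\bar\varphi(t)=\kappa-Be^{-\mu t}$ is already on the second branch while $\bar\varphi(t-\sigma)=Ae^{\lambda(t-\sigma)}$ and $\bar\varphi(t-r)=Ae^{\lambda(t-r)}$ are still exponential, so neither the characteristic-root identity (too crude once the second branch appears) nor the estimate of the previous case applies verbatim. Using $\rho\,\bar\varphi(t-r)e^{-\bar\varphi(t-r)}\le(\delta+H)\kappa$ (legitimate since $\bar\varphi(t-r)\le A<\kappa$) but keeping the harvesting term $H\bar\varphi(t-\sigma)=HAe^{\lambda(t-\sigma)}$ intact, the required inequality reduces to showing that
\[
\Phi(t):=(\mu-\delta)Be^{-\mu t}+HAe^{\lambda(t-\sigma)}-H\kappa\ \ge\ 0,\qquad 0<t\le\sigma.
\]
I would then observe that $\Phi$ is convex and, comparing $\mu(\mu-\delta)Be^{-\mu t}$ with $\lambda HAe^{\lambda(t-\sigma)}$ and using that $\mu\approx1/\sigma$ is large (precisely $\mu(\mu-\delta)Be^{-\mu\sigma}\ge\lambda HA$), that its minimiser on $(0,\sigma]$ is the right endpoint, so that it suffices to check $\Phi(\sigma)=B\bigl((\mu-\delta)e^{-\mu\sigma}-H\bigr)\ge0$. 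For $\mu=\mu_0$ both of these reduce to the single inequality $\sigma He^{1+\delta\sigma}\le1$, which holds for all $\sigma\le\sigma_0$ by the defining relation \eqref{condsigma0} of $\sigma_0$ together with the monotonicity of $s\mapsto sHe^{1+\delta s}$; the same estimates persist for $\mu$ in a (possibly smaller) neighbourhood of $\mu_0$. Assembling the three cases gives $-\bar\varphi'(t)+f(\bar\varphi_t)\le0$ for a.e.\ $t$, which is the assertion.

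The genuinely delicate points are the exact cancellations --- at $t\le0$ through $\chi_0(\lambda)=0$, at $t=\sigma$ through the tangency relation defining $\sigma_0$ --- and being careful to invoke the monotonicity of $h$ on $[0,1]$ only where the relevant argument of $\bar\varphi$ really stays $\le\kappa\le1$, so that the bound $\rho\,h(\cdot)\le(\delta+H)\kappa$ used in the second and third cases is valid, while the sharper $e^{-(\cdot)}\le1$ is what is needed in the first.
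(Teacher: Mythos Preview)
Your proposal is correct and takes essentially the same approach as the paper: the same case-split (the paper further divides $t>\sigma$ at $t=r$ but treats both pieces identically), the characteristic identity $\chi_0(\lambda)=0$ on $(-\infty,0]$, and inequality \eqref{mu0} on $(\sigma,\infty)$. On the transition interval $(0,\sigma]$ your $\Phi$ is exactly the negative of the paper's auxiliary function $g$; both arguments show this function is monotone on $(0,\sigma]$ (you via convexity and $\Phi'(\sigma)\le0$, the paper via a direct estimate of $g'$) and then check the endpoint $t=\sigma$, where---as you correctly observe---both the derivative condition and the value condition collapse to the single hypothesis $\mu-\delta\ge He^{\mu\sigma}$ of \cref{lema_cond1}.
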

    ﻿
    \begin{proof}
     Substituting \eqref{dsuper} into \eqref{NichH}, we consider 4 cases:
     \begin{description}
    \item[Case 1:] If $t \in(-\infty, 0]$:
    ﻿
    $$
    \begin{aligned}
    & -\bar{\varphi}^{\prime}(t)-\delta \bar{\varphi}(t)-H \bar{\varphi}(t-\sigma)+\rho \bar{\varphi}(t-r) e^{-\bar{\varphi}(t-r)} \\
    \leq & -\bar{\varphi}^{\prime}(t)-\delta \bar{\varphi}(t)-H \bar{\varphi}(t-\sigma)+\rho \bar{\varphi}(t-r) \\
    = & -\frac{\lambda \mu}{\lambda+\mu} \kappa e^{\lambda t}-\delta \frac{\mu}{\lambda+\mu} \kappa e^{\lambda t}-H \frac{\mu}{\lambda+\mu} \kappa e^{\lambda(t-\sigma)}+\rho \frac{\mu}{\lambda+\mu} \kappa e^{\lambda(t-r)} \\
    = & \frac{\kappa \mu}{\lambda+\mu} e^{\lambda t}\left[-\lambda-\delta-H e^{-\lambda \sigma}+\rho e^{-\lambda r}\right] \\
    = & 0
    \end{aligned}
    $$
    ﻿
    \item[Case 2:] If $t \in(0, \sigma]$:
    ﻿
    $$
    \begin{aligned}
    & -\bar{\varphi}^{\prime}(t)-\delta \bar{\varphi}(t)-H \bar{\varphi}(t-\sigma)+\rho \bar{\varphi}(t-r) e^{-\bar{\varphi}(t-r)} \\
    \leq & -\frac{\lambda \mu \kappa}{\lambda+\mu} e^{-\mu t}-\delta \kappa\left(1-\frac{\lambda}{\lambda+\mu} e^{-\mu t}\right)-H \frac{\mu \kappa}{\lambda+\mu} e^{\lambda(t-\sigma)}+p \kappa e^{-\kappa} \\
    = & -\frac{\lambda \mu \kappa}{\lambda+\mu} e^{-\mu t}-\delta \kappa+p \kappa e^{-\kappa}+\delta \frac{\lambda \kappa}{\lambda+\mu} e^{-\mu t}-H \frac{\mu \kappa}{\lambda+\mu} e^{\lambda(t-\sigma)} \\
    = & -\frac{\lambda \mu \kappa}{\lambda+\mu} e^{-\mu t}+H \kappa+\delta \frac{\lambda \kappa}{\lambda+\mu} e^{-\mu t}-H \frac{\mu \kappa}{\lambda+\mu} e^{\lambda(t-\sigma)}.
    \end{aligned}
    $$
    ﻿
    For $t \in[0, \sigma]$, define $$g(t):=-\frac{\lambda \mu \kappa}{\lambda+\mu} e^{-\mu t}+H \kappa+\delta \frac{\lambda \kappa}{\lambda+\mu} e^{-\mu t}-H \frac{\mu \kappa}{\lambda+\mu} e^{\lambda(t-\sigma)}.$$
    Observe that:
    %$$
    %\begin{aligned}
    %g(0) & =-\frac{\kappa}{\lambda+\mu}\left[\lambda(\mu-\delta)-H(\mu+\lambda)+H \mu e^{-\lambda \sigma}\right] \\
    %& \leq-\frac{\kappa}{\lambda+\mu}\left[\lambda H e^{\sigma \mu}-H(\mu+\lambda)+H \mu e^{-\lambda \sigma}\right] \\
    %& =-\frac{\kappa H}{\lambda+\mu}\left[\lambda\left(e^{\sigma \mu}-1\right)+\mu\left(e^{-\lambda \sigma}-1\right)\right] \\
    %& =-\frac{\kappa H}{\lambda+\mu}\left[\lambda \mu \int_{0}^{\sigma} e^{s \mu} d s-\mu \lambda \int_{0}^{\sigma} e^{-s \lambda} d s\right] \\
    %& =-\frac{\kappa H \mu \lambda}{\lambda+\mu}\left[\int_{0}^{\sigma}\left(e^{s \mu}-e^{-s \lambda}\right) d s\right] \\
    %& \leq 0
    %\end{aligned}
    %$$
    %and using \eqref{mu0}, we have:
    $$
    \begin{aligned}
    g(\sigma) & =-\frac{\kappa \lambda}{\lambda+\mu} e^{-\mu \sigma}(\mu-\delta)+H \kappa-H \frac{\kappa \mu}{\mu+\lambda} \\
    & =-\frac{\kappa \lambda}{\lambda+\mu} e^{-\mu \sigma}(\mu-\delta)+H \frac{\kappa \lambda}{\mu+\lambda} \\
    & =-\frac{\kappa \lambda}{\lambda+\mu} e^{-\mu \sigma}\left(\mu-\delta-H e^{\mu \sigma}\right) \\
    & \leq 0
    \end{aligned}
    $$
    ﻿
    Moreover, the derivative $g^{\prime}(t) \geq 0$ in $[0, \sigma]$. Indeed:
    ﻿
    $$
    \begin{aligned}
    g^{\prime}(t) & =\frac{\kappa \lambda \mu}{\lambda+\mu} e^{-\mu t}(\mu-\delta)-H \frac{\kappa \lambda \mu}{\lambda+\mu} e^{\lambda(t-\sigma)} \\
    & \geq \frac{\kappa \lambda \mu}{\lambda+\mu}\left(e^{-\mu t} H e^{\mu \sigma}-H e^{\lambda(t-\sigma)}\right) \\
    & =\frac{\kappa \lambda \mu H}{\lambda+\mu}\left(e^{-\mu(t-\sigma)}-e^{\lambda(t-\sigma)}\right) \\
    & \geq 0,
    \end{aligned}
    $$
    ﻿
    for all $t \in[0, \sigma]$. In conclusion, $g(t) \leq 0$ in $[0, \sigma]$, which completes this case.\\
    \item[Case 3:] If $t \in[\sigma, r]$:
    ﻿
    $$
    \begin{aligned}
    & -\bar{\varphi}^{\prime}(t)-\delta \bar{\varphi}(t)-H \bar{\varphi}(t-\sigma)+\rho \bar{\varphi}(t-r) e^{-\bar{\varphi}(t-r)} \\
    = & -\frac{\lambda \mu \kappa}{\lambda+\mu} e^{-\mu t}-\delta \kappa\left(1-\frac{\lambda}{\lambda+\mu} e^{-\mu t}\right)-H \kappa\left(1-\frac{\lambda}{\lambda+\mu} e^{-\mu(t-\sigma)}\right)+\rho \bar{\varphi}(t-r) e^{-\bar{\varphi}(t-r)} \\
    = & -\frac{\lambda \kappa}{\lambda+\mu} e^{-\mu t}\left[\mu-\delta-H e^{\mu \sigma}\right]-\kappa(\delta+H)+\rho \bar{\varphi}(t-r) e^{-\bar{\varphi}(t-r)} \\
    = & -\frac{\lambda \kappa}{\lambda+\mu} e^{-\mu t}\left[\mu-\delta-H e^{\mu \sigma}\right]-\kappa \rho e^{-\kappa}+\rho \bar{\varphi}(t-r) e^{-\bar{\varphi}(t-r)} \\
    = & -\frac{\lambda \kappa}{\lambda+\mu} e^{-\mu t}\left[\mu-\delta-H e^{\mu \sigma}\right]-\rho\left[\kappa e^{-\kappa}-\bar{\varphi}(t-r) e^{-\bar{\varphi}(t-r)}\right] \\
    \leq & 0 .
    \end{aligned}
    $$
    ﻿
    \item[Case 4:] If $t \in[r,+\infty)$, the proof is the same as in Case 3.
    \end{description}
    \end{proof}
    
    ﻿\noindent
    The proposed upper solution and its verification,  i.e. 
    $$\Phi(t):=-\bar{\varphi}^{\prime}(t)-\delta \bar{\varphi}(t)-H \bar{\varphi}(t-\sigma)-\rho \bar{\varphi}(t-r) e^{-\bar{\varphi}(t-r)} \leq 0, \ \mbox{a.e.}\ t \in \R,$$  are showing in  \cref{vsuper}.
    
    \begin{figure}[H]
    \begin{center}
    \includegraphics[width=12.5cm]{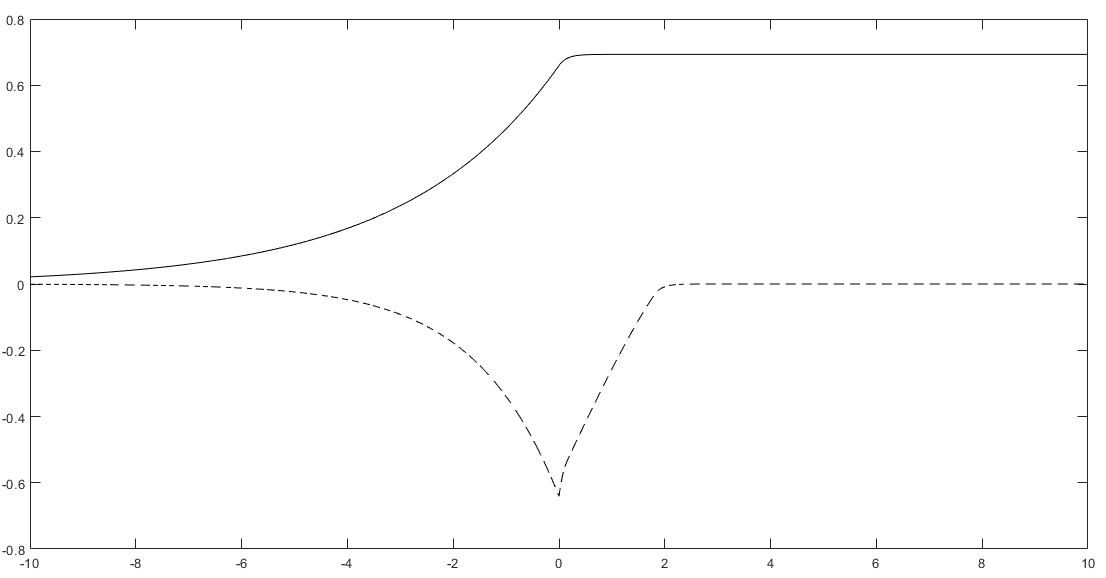}
    \caption{Graph of upper solution $\bar{\varphi}$, with $\mu=6.71, \lambda=0.3420$, coefficients $\delta=1, H=2$, $\rho=6$, delays $\sigma=0.15, r=1.8$ and its verification.}
    \label{vsuper}
    \end{center}
    \end{figure}
    ﻿
   % \newpage
    In the following lemma, we construct the lower solution for \eqref{NichH}:
    \begin{lemma}\label{lema_subsol}  Let $t_{0}<0$ and let $0<\epsilon<\lambda$ and $\alpha>0$ satisfy:
    \begin{description}
        \item[(A)]
    ﻿
    \begin{equation}\label{difexp}
    H e^{-(\lambda+\epsilon) \sigma}-\rho e^{-(\lambda+\epsilon) r} \geq 0 
    \end{equation}
    ﻿
    \item[(B)]
    ﻿
    \begin{equation}
    \alpha<\frac{1}{\rho} \min \left\{\lambda+\delta+H e^{-(\lambda+\epsilon) \sigma}-\rho e^{-(\lambda+\epsilon) r} , \frac{-\chi_{0}(\lambda+\epsilon) (\lambda+\epsilon)^{\frac{\lambda+\epsilon}{\epsilon}} }{4  \epsilon^{2} (\lambda-\epsilon)^{\frac{\lambda- \epsilon}{\epsilon}}}\right\} .   \label{alphac}
    \end{equation}
    \end{description}
    Then, the function $\underline{\varphi}: \mathbb{R} \rightarrow \mathbb{R}$ defined by:
    ﻿
    \[
    \underline{\varphi}(t):= \begin{cases}\alpha\left(1-e^{\epsilon\left(t-t_{0}\right)}\right) e^{\lambda t}, & t \leq t_{0},  \label{dsub}\\ & \\ 0, & t>t_{0}.\end{cases}
    \]
    ﻿
    is a lower solution for \eqref{NichH}.
    \end{lemma}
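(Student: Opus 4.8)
The plan is to verify directly, for a.e.\ $t\in\mathbb{R}$, the lower--solution inequality
\[
-\underline{\varphi}'(t)-\delta\underline{\varphi}(t)-H\underline{\varphi}(t-\sigma)+\rho\underline{\varphi}(t-r)e^{-\underline{\varphi}(t-r)}\ \ge\ 0,
\]
splitting $\mathbb{R}$ according to where the three arguments $t,\ t-\sigma,\ t-r$ lie relative to the break point $t_{0}$. Since $\sigma<r$ (condition (A2)), on each of $(-\infty,t_{0}]$, $(t_{0},t_{0}+\sigma]$ and $(t_{0}+\sigma,+\infty)$ these arguments fall into a fixed pattern of the two branches of $\underline{\varphi}$, so only a few sub-cases arise. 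As preliminaries I would record that $\underline{\varphi}\ge 0$ on $\mathbb{R}$, that $\underline{\varphi}$ is continuous and piecewise $C^{1}$ with a single corner at $t_{0}$ (so $\underline{\varphi}'$ exists a.e.\ and is essentially bounded), and that $\|\underline{\varphi}\|_{\infty}$ is at most $\alpha$ times an explicit constant $<1$, obtained by maximizing $s\mapsto e^{\lambda s}(1-e^{\epsilon s})$ over $s\le 0$; this smallness of $\underline{\varphi}$ is what lets the nonlinear term be dominated.

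For $t>t_{0}+\sigma$ one has $\underline{\varphi}(t)=\underline{\varphi}(t-\sigma)=\underline{\varphi}'(t)=0$, so the inequality reduces to $\rho\underline{\varphi}(t-r)e^{-\underline{\varphi}(t-r)}\ge 0$, immediate from $\underline{\varphi}\ge 0$. For $t_{0}<t\le t_{0}+\sigma$ one still has $\underline{\varphi}(t)=0=\underline{\varphi}'(t)$, while $t-\sigma\le t_{0}$ and (because $\sigma<r$) $t-r<t_{0}$, so it remains to show $\rho\underline{\varphi}(t-r)e^{-\underline{\varphi}(t-r)}\ge H\underline{\varphi}(t-\sigma)$. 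Here I would use the identity produced by $\chi_{0}(\lambda)=0$, namely $\rho e^{-\lambda r}-He^{-\lambda\sigma}=\lambda+\delta>0$, together with condition (A), to compute
\[
\rho\underline{\varphi}(t-r)-H\underline{\varphi}(t-\sigma)=\alpha(\lambda+\delta)e^{\lambda t}+\alpha e^{-\epsilon t_{0}}\bigl(He^{-(\lambda+\epsilon)\sigma}-\rho e^{-(\lambda+\epsilon)r}\bigr)e^{(\lambda+\epsilon)t}\ \ge\ \alpha(\lambda+\delta)e^{\lambda t};
\]
combining this with $e^{-\underline{\varphi}(t-r)}\ge e^{-\|\underline{\varphi}\|_{\infty}}$ and $\underline{\varphi}(t-\sigma)\le\alpha e^{-\lambda\sigma}e^{\lambda t}$, a short rearrangement reduces the claim to $e^{-\|\underline{\varphi}\|_{\infty}}(\lambda+\delta)\ge(1-e^{-\|\underline{\varphi}\|_{\infty}})He^{-\lambda\sigma}$, which holds once $\|\underline{\varphi}\|_{\infty}$, hence $\alpha$, is small enough by (B).

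The decisive case is $t\le t_{0}$, where both delayed arguments also lie in the exponential branch, $\underline{\varphi}(s)=\alpha e^{\lambda s}-\alpha e^{-\epsilon t_{0}}e^{(\lambda+\epsilon)s}$. Using $e^{-u}\ge 1-u$ and $\underline{\varphi}(t-r)\ge 0$ to replace $\rho\underline{\varphi}(t-r)e^{-\underline{\varphi}(t-r)}$ by $\rho\underline{\varphi}(t-r)-\rho\underline{\varphi}(t-r)^{2}$, the linear part collapses through the two characteristic values: the coefficient of $e^{\lambda t}$ is $\chi_{0}(\lambda)=0$ and that of $e^{(\lambda+\epsilon)t}$ is $\chi_{0}(\lambda+\epsilon)$, so
\[
-\underline{\varphi}'(t)-\delta\underline{\varphi}(t)-H\underline{\varphi}(t-\sigma)+\rho\underline{\varphi}(t-r)=\alpha e^{-\epsilon t_{0}}\bigl(-\chi_{0}(\lambda+\epsilon)\bigr)e^{(\lambda+\epsilon)t},
\]
which is strictly positive because (A) forces $-\chi_{0}(\lambda+\epsilon)\ge\lambda+\epsilon+\delta>0$. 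It then remains to dominate the quadratic remainder $\rho\underline{\varphi}(t-r)^{2}$ by this leading term, uniformly in $t\le t_{0}$ and in $t_{0}<0$. After the substitution $\tau=t-r-t_{0}\le -r$ the ratio of remainder to leading term equals $e^{\lambda t_{0}}e^{-(\lambda+\epsilon)r}\cdot\dfrac{\rho\alpha}{-\chi_{0}(\lambda+\epsilon)}\,e^{(\lambda-\epsilon)\tau}(1-e^{\epsilon\tau})^{2}$; since $e^{\lambda t_{0}}<1$ and $e^{-(\lambda+\epsilon)r}<1$, and since maximizing $\tau\mapsto e^{(\lambda-\epsilon)\tau}(1-e^{\epsilon\tau})^{2}$ over $\tau\le 0$ — equivalently $y^{(\lambda-\epsilon)/\epsilon}(1-y)^{2}$ over $y=e^{\epsilon\tau}\in(0,1]$, with maximizer $y=(\lambda-\epsilon)/(\lambda+\epsilon)$ — produces exactly the constant $\dfrac{4\epsilon^{2}(\lambda-\epsilon)^{(\lambda-\epsilon)/\epsilon}}{(\lambda+\epsilon)^{(\lambda+\epsilon)/\epsilon}}$, the second term in the minimum defining (B) is precisely what forces this ratio $\le 1$.

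The main obstacle is this last uniform control of the quadratic term on the whole half-line $(-\infty,t_{0}]$: it is essential that $\epsilon<\lambda$ so that the error $e^{2\lambda t}$ decays strictly faster than the positive leading term $e^{(\lambda+\epsilon)t}$ as $t\to-\infty$, and the sharp maximization above (rather than any crude bound) is what lets the inequality close for every admissible $t_{0}<0$. Everything else — the two trivial regions, the corner at $t_{0}$, and the estimate on $(t_{0},t_{0}+\sigma]$ — is then routine once the identities $\chi_{0}(\lambda)=0$ and $-\chi_{0}(\lambda+\epsilon)>0$ provided by (A) are in hand.
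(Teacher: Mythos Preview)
Your decomposition into the three intervals $(-\infty,t_0]$, $(t_0,t_0+\sigma]$, $(t_0+\sigma,\infty)$ and your handling of the decisive case $t\le t_0$ coincide with the paper's proof: both use $e^{-x}\ge 1-x$, collapse the linear part via $\chi_0(\lambda)=0$ to isolate the positive term $-\chi_0(\lambda+\epsilon)\,\alpha e^{-\epsilon t_0}e^{(\lambda+\epsilon)t}$, and then dominate $\rho\,\underline{\varphi}(t-r)^2$ by an explicit maximization that produces exactly the second branch of (B). Your substitution $y=e^{\epsilon\tau}$ and the computation of the maximizer $y=(\lambda-\epsilon)/(\lambda+\epsilon)$ is in fact a tidier version of what the paper does.

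The one loose end is on $(t_0,t_0+\sigma]$. The paper does \emph{not} use $e^{-\underline{\varphi}(t-r)}\ge e^{-\|\underline{\varphi}\|_\infty}$ there; it reuses $e^{-x}\ge 1-x$, \emph{keeps} the nonnegative term $\alpha e^{(\lambda+\epsilon)t-\epsilon t_0}\bigl(He^{-(\lambda+\epsilon)\sigma}-\rho e^{-(\lambda+\epsilon)r}\bigr)$ that you discarded, bounds $\underline{\varphi}(t-r)^2\le\alpha^2 e^{2\lambda(t-r)}\le\alpha^2 e^{\lambda t_0}$ (valid because $t-r<t_0<0$), and arrives directly at $\rho\alpha<\lambda+\delta+He^{-(\lambda+\epsilon)\sigma}-\rho e^{-(\lambda+\epsilon)r}$, i.e.\ the first branch of (B). Your route instead reduces to $(\lambda+\delta)e^{-\|\underline{\varphi}\|_\infty}\ge(1-e^{-\|\underline{\varphi}\|_\infty})He^{-\lambda\sigma}$, which is a \emph{different} smallness condition on $\alpha$; the claim that it ``holds by (B)'' is not justified as written, and indeed the specific first branch of (B) does not obviously imply it. The easy fix is to treat this interval exactly as you treated $t\le t_0$: apply $e^{-x}\ge 1-x$ and retain the $(He^{-(\lambda+\epsilon)\sigma}-\rho e^{-(\lambda+\epsilon)r})$ contribution, after which the first branch of (B) is precisely what closes the estimate --- and this is the reason that quantity appears in the hypothesis at all.
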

    ﻿
    \begin{proof} As in the previous lemma, we divide the proof into cases:
    \begin{description}
        \item[Case 1:] $t \in\left(-\infty, t_{0}\right]$:   Using  the inequality $e^{-x} \geq 1-x$, for all $x \in \mathbb{R}$, we obtain:
    $$
    \begin{aligned}
    & -\underline{\varphi}^{\prime}(t)-\delta \underline{\varphi}(t)-H \underline{\varphi}(t-\sigma)+\rho \underline{\varphi}(t-r) e^{-\underline{\varphi}(t-r)} \\
    \geq & -\underline{\varphi}^{\prime}(t)-\delta \underline{\varphi}(t)-H \underline{\varphi}(t-\sigma)+\rho \underline{\varphi}(t-r)-\rho \underline{\varphi}^{2}(t-r) \\
    = & -\alpha\left(\lambda-(\epsilon+\lambda) e^{\epsilon\left(t-t_{0}\right)}\right) e^{\lambda t}-\delta \alpha\left(1-e^{\epsilon\left(t-t_{0}\right)}\right) e^{\lambda t}-H \alpha\left(1-e^{\epsilon\left(t-t_{0}-\sigma\right)}\right) e^{\lambda(t-\sigma)} \\
    + & \rho \alpha\left(1-e^{\epsilon\left(t-t_{0}-r\right)}\right) e^{\lambda(t-r)}-\rho \varphi^{2}(t-r) \\
    %= & \alpha e^{\lambda t}\left(-\lambda-\delta-H e^{-\lambda \sigma}+\rho e^{-\lambda r}\right)+\alpha(\epsilon+\lambda) e^{(\epsilon+\lambda) t-\epsilon t_{0}}+\delta \alpha e^{(\lambda+\epsilon) t-\epsilon t_{0}}+H \alpha e^{(\lambda+\epsilon)(t-\sigma)-\epsilon t_{0}} \\
    %- & \rho \alpha e^{(\lambda+\epsilon)(t-r)-\epsilon t_{0}}-\rho \underline{\varphi}^{2}(t-r) \\
    = &  \, \alpha e^{\lambda t}\left(-\lambda-\delta-H e^{-\lambda \sigma}+\rho e^{-\lambda r}\right)+ \alpha e^{(\lambda+\epsilon) t-\epsilon t_{0}}\left((\lambda+\epsilon)+\delta+H e^{-(\lambda+\epsilon) \sigma}-\rho e^{-(\lambda+\epsilon) r}\right)\\
    -&\rho \underline{\varphi}^{2}(t-r) \\
    = & -\chi_{0}(\lambda+\epsilon) \alpha e^{(\epsilon+\lambda) t-\epsilon t_{0}}-\rho \alpha^{2}\left(1-e^{\epsilon\left(t-t_{0}-r\right)}\right)^{2} e^{2 \lambda(t-r)} \\
    = & \alpha e^{(\epsilon+\lambda) t-\epsilon t_{0}}\left(-\chi_{0}(\epsilon+\lambda)-\rho \alpha\left(e^{\frac{-\epsilon\left(t-t_{0}-r\right)}{2}}-e^{\frac{\epsilon\left(t-t_{0}-r\right)}{2}}\right)^{2} e^{\lambda t-r(2 \lambda+\epsilon)}\right) \\
    = & \alpha e^{(\epsilon+\lambda) t-\epsilon t_{0}}\left(-\chi_{0}(\epsilon+\lambda)-\rho \alpha\left(e^{\frac{(\lambda-\epsilon) t+\epsilon\left(t_{0}+r\right)}{2}}-e^{\frac{(\lambda+\epsilon) t-\epsilon\left(t_{0}+r\right)}{2}}\right)^{2} e^{-r(2 \lambda+\epsilon)}\right) \\
    \geq &   \, \alpha e^{(\epsilon+\lambda) t-\epsilon t_{0}}\left(-\chi_{0}(\epsilon+\lambda)-\rho \alpha\left(e^{\frac{(\lambda-\epsilon) t+\epsilon\left(t_{0}+r\right)}{2}}-e^{\frac{(\lambda+\epsilon) t-\epsilon\left(t_{0}+r\right)}{2}}\right)^{2} e^{-r \lambda}\right) .
    \end{aligned}
    $$
    ﻿
    Let $h(t)=\left(e^{\frac{(\lambda-\epsilon) t+\epsilon\left(t_{0}+r\right)}{2}}-e^{\frac{(\lambda+\epsilon) t-\epsilon\left(t_{0}+r\right)}{2}}\right)^{2}$.  It is straightforward to see that  $\lim\limits _{t \rightarrow-\infty} h(t)=0$ and that  the global maximum value of $h(t)$ is attained at $t^{*}$, given by:
    $$
    t^{*}=\frac{1}{\epsilon} \ln \left(\frac{\lambda-\epsilon}{\lambda+\epsilon}\right)+t_{0}+r .
    $$
    Moreover,
    $$
    h\left(t^{*}\right)=\frac{4 \epsilon^{2}}{(\lambda+\epsilon)^{2}}\left(\frac{\lambda-\epsilon}{\lambda+\epsilon}\right)^{\frac{\lambda-\epsilon}{\epsilon}} e^{\lambda\left(t_{0}+r\right)}.
    $$
    Hence, note that the right side is larger than
    $$\alpha e^{(\epsilon+\lambda)t} \left(  -\chi_{0}(\epsilon+\lambda) -\rho \alpha \frac{4 \epsilon^{2}}{(\lambda+\epsilon)^{2}}\left(\frac{\lambda-\epsilon}{\lambda+\epsilon}\right)^{\frac{\lambda-\epsilon}{\epsilon}}\right) .  $$
    
   % To ensure the positivity of the right-hand side, we need to choose:
    ﻿
    %$$
    %\alpha \leq \frac{-\chi_{0}(\lambda+\epsilon) e^{r(2 \lambda+\epsilon)}}{\rho\left(e^{\frac{(\lambda-\epsilon) t \epsilon\left(t_{0}+r\right)}{2}}-e^{\frac{(\lambda+\epsilon) t-\epsilon\left(t_{0}+r\right)}{2}}\right)^{2}}=: v(t)
    %$$
    %for all $t \in\left(-\infty, t_{0}\right]$.
    %It is straightforward to see that the minimum value of $v(t)$ is attained at $t^{*}$ and is given by:
    %    $$
    %v\left(t^{*}\right)=\frac{-\chi_{0}(\lambda+\epsilon)}{4 \rho \epsilon^{2}} \frac{(\lambda+\epsilon)^{\frac{\lambda+\epsilon}{\epsilon}} e^{r(\lambda+\epsilon)}}{(\lambda-\epsilon)^{\frac{\lambda \epsilon}{\epsilon}} e^{\lambda t_{0}}}
    %$$
    ﻿
    Therefore, from \eqref{difexp} we conclude this case.
    \item[Case 2:] $t \in\left[t_{0}, t_{0}+\sigma\right]$: 
    Note that $t-r \leq t_{0}$, from which
    \begin{equation*}
    	\underline{\varphi}(t-r)=\alpha \left( 1-e^{\epsilon(t-r-t_{0})}\right)e^{\lambda (t-r)} \leq \alpha e^{\lambda (t-r)}. 
    \end{equation*}
    %Let $\bar{t}$ be the point where $\underline{\varphi}$ attains its global maximum. It is easy to see that $\bar{t}=t_{0}+\frac{1}{\epsilon} \ln \left(\frac{\lambda}{\lambda+\epsilon}\right)$ and $\underline{\varphi}(\bar{t})=\frac{\alpha \epsilon}{\lambda+\epsilon}\left(\frac{\lambda}{\lambda+\epsilon}\right)^{\frac{\lambda}{\epsilon}} e^{\lambda t_{0}}$.% 
    Using this fact and the inequality $e^{-x} \geq 1-x$, for all $x \in \mathbb{R}$, we obtain:
    ﻿
    $$
    \begin{aligned}
    & -\underline{\varphi}^{\prime}(t)-\delta \underline{\varphi}(t)-H \underline{\varphi}(t-\sigma)+\rho \underline{\varphi}(t-r) e^{-\underline{\varphi}(t-r)} \\
    \geq & -\underline{\varphi}^{\prime}(t)-\delta \underline{\varphi}(t)-H \underline{\varphi}(t-\sigma)+\rho \underline{\varphi}(t-r)-\rho \underline{\varphi}^{2}(t-r) \\
    =& -\alpha H\left(1-e^{\epsilon\left(t-t_{0}-\sigma\right)}\right) e^{\lambda(t-\sigma)}+\alpha \rho\left(1-e^{\epsilon\left(t-t_{0}-r\right)}\right) e^{\lambda(t-r)}-\rho \underline{\varphi}^{2}(t-r)\\
    \geq & \, \alpha e^{\lambda t}\left(-H e^{-\lambda \sigma}+\rho e^{-\lambda r}\right)+\alpha e^{(\lambda+\epsilon) t-\epsilon t_{0}}\left(H e^{-\sigma(\lambda+\epsilon)}-\rho e^{-r(\lambda+\epsilon)}\right)-\rho \alpha^{2}  e^{2 \lambda(t-r)} \\
    \geq & \, \alpha e^{\lambda t_{0}}(\lambda+\delta)+\alpha e^{\lambda t_{0}}\left(H e^{-(\lambda+\epsilon) \sigma}-\rho e^{-(\lambda+\epsilon) r}\right)-\rho \alpha^{2}  e^{ \lambda t_{0}}   \\
    = & \, \alpha e^{\lambda t_{0}}\left(\lambda+\delta+H e^{-(\lambda+\epsilon) \sigma}-\rho e^{-(\lambda+\epsilon) r}-\alpha \rho \right)\\
    \geq & \,  0,
    \end{aligned}
    $$
    where the last inequality follows from \eqref{difexp}.
    \item[Case 3:] $t \in\left[t_{0}+\sigma,+\infty\right)$: This is trivial.
    \end{description}
    \end{proof}
    %%%%%%%%%%%%%%%%%%%%%%%%%%%%%%%%%%%%%%%%%%%%%%%%%%%%%%%%%%%%%%%%%%%%%%%%%%%%%%%%%%%%%%%%%%%%%%%%%%%%%%%%%%%%%%%%%%%%%%%%%%%%%%%%%%%%%%%%%%%%%%%
    %%%%%%%%%%%%%%%%%%%%%%%%%%%%%%%%%%%%%%%%%%%%%%%%%%%%%%%%%%%%%%%%%%%%%%%%%%%%%%%%%%%%%%%%%%%%%%%%%%%%%%%%%%%%%%%%%%%%%%%%%%%%%%%%%%%%%%%%%%%%%%%
    \noindent
    The graph of the proposed lower solution and its verification, i.e. $$\Psi(t):=-\underline{\varphi}^{\prime}(t)-\delta \underline{\varphi}(t)-H \underline{\varphi}(t-\sigma)-\rho \underline{\varphi}(t-r) e^{-\underline{\varphi}(t-r)} \geq 0\mbox{ a.e. } t \in \mathbb{R}$$ are showed in the following figure:
    ﻿
    \begin{figure}[H]
    \begin{center}
    \includegraphics[width=12.5cm]{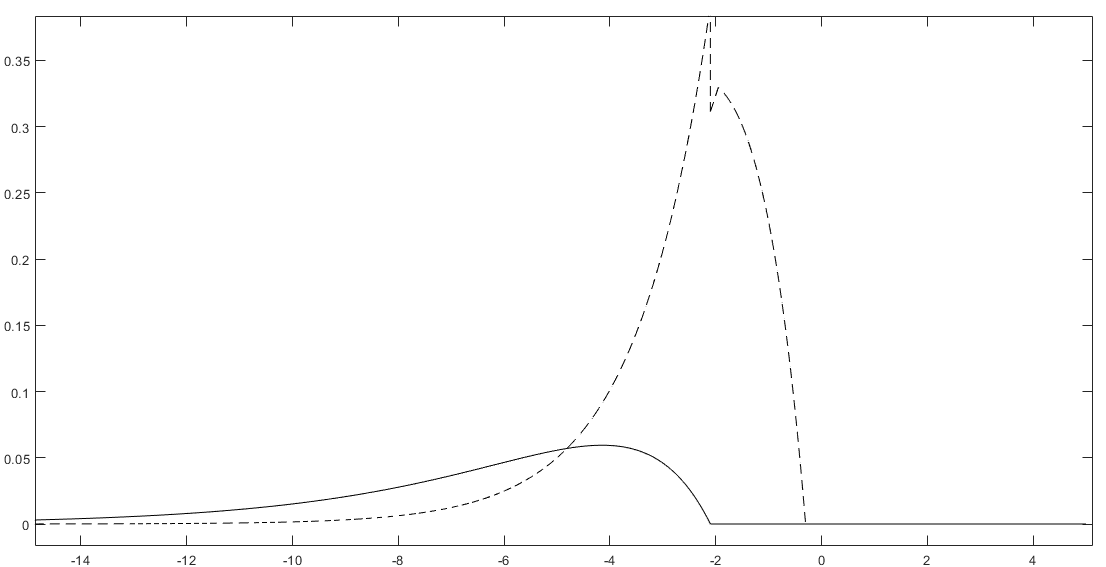}
    \caption{ Graph of lower solution $\underline{\varphi}$, with $\alpha=0.5, \epsilon=0.33$, coefficients $\delta=1, H=2$, $\rho=6$, delays $\sigma=0.15, r=1.8$ and its verification.
    }
    \label{vsub}
    \end{center}
    \end{figure}
    ﻿
    ﻿
    %%%%%%%%%%%%%%%%%%%%%%%%%%%%%%%%%%%%%%%%%%%%%%%%%%%%%%%%%%%%%%%%%%%%%%%%%%%%%%%%%%%%%%%%%%%%%%%%%%%%%%%%%%%%%%%%%%%%%%%%%%%%%%%%%%%%%%%%%%%%%%%
    %%%%%%%%%%%%%%%%%%%%%%%%%%%%%%%%%%%%%%%%%%%%%%%%%%%%%%%%%%%%%%%%%%%%%%%%%%%%%%%%%%%%%%%%%%%%%%%%%%%%%%%%%%%%%%%%%%%%%%%%%%%%%%%%%%%%%%%%%%%%%%%
    ﻿
    Next, we show that the proposed upper solution belongs to the profile set $\Gamma$. For this, consider $\mu=\beta$.
    \begin{lemma}\label{subperfil}
     The upper solution \eqref{dsuper} belongs to the profile set $\Gamma$.
     \end{lemma}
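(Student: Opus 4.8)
The plan is to check, one at a time, the three conditions defining membership in $\Gamma$, taking $\beta=\mu$ throughout. This is a legitimate choice of the quasi-monotonicity constant: by \cref{lema_cond1} we have $\mu-\delta-He^{\sigma\mu}\ge 0$, hence $\mu-\delta\ge He^{\sigma\mu}>H$, so $\mu>\delta+H$ and \cref{lemaH2} applies with $\beta=\mu$.

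I would first dispatch the elementary requirements. At $t=0$ both branches of \eqref{dsuper} return $\tfrac{\mu}{\mu+\lambda}\kappa$, so $\bar\varphi\in C(\mathbb{R},\mathbb{R})$; in fact the one-sided derivatives at $0$ both equal $\tfrac{\lambda\mu}{\mu+\lambda}\kappa$, so $\bar\varphi\in C^{1}(\mathbb{R},\mathbb{R})$. The asymptotics $\bar\varphi(-\infty)=0$ and $\bar\varphi(+\infty)=\kappa$ follow from $e^{\lambda t}\to 0$ as $t\to-\infty$ and $e^{-\mu t}\to 0$ as $t\to+\infty$; and since $\bar\varphi'(t)=\tfrac{\lambda\mu}{\mu+\lambda}\kappa e^{\lambda t}>0$ for $t<0$ while $\bar\varphi'(t)=\tfrac{\lambda\mu}{\mu+\lambda}\kappa e^{-\mu t}>0$ for $t>0$, $\bar\varphi$ is strictly increasing, hence non-decreasing, on $\mathbb{R}$.

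The substantive part is the last condition: for each fixed $s>0$, the function $g_{s}(t):=e^{\mu t}\bigl(\bar\varphi(t+s)-\bar\varphi(t)\bigr)$ should be non-decreasing in $t$. I would split according to the position of $t$ and $t+s$ relative to $0$. If $t+s\le 0$, a direct substitution gives $g_{s}(t)=\tfrac{\mu\kappa}{\mu+\lambda}(e^{\lambda s}-1)e^{(\mu+\lambda)t}$, which is increasing. If $t>0$, one finds $g_{s}(t)=\tfrac{\lambda\kappa}{\mu+\lambda}(1-e^{-\mu s})$, a positive constant. In the remaining intermediate regime $-s<t\le 0$ (so $t\le 0<t+s$) one computes $g_{s}(t)=\kappa e^{\mu t}-\tfrac{\lambda\kappa}{\mu+\lambda}e^{-\mu s}-\tfrac{\mu\kappa}{\mu+\lambda}e^{(\mu+\lambda)t}$, whence $g_{s}'(t)=\kappa\mu e^{\mu t}\bigl(1-e^{\lambda t}\bigr)\ge 0$ because $t\le 0$. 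Since $\bar\varphi$ is continuous, $g_{s}$ is continuous on $\mathbb{R}$, and being piecewise $C^{1}$ with non-negative derivative on each of the three intervals, $g_{s}$ is non-decreasing on all of $\mathbb{R}$; as $s>0$ was arbitrary, $\bar\varphi\in\Gamma$. The only place asking for any care is the intermediate regime, where both branches of $\bar\varphi$ are in play — but the derivative there collapses to the manifestly non-negative $\kappa\mu e^{\mu t}(1-e^{\lambda t})$, so no genuine obstacle appears. Together with \cref{supersol}, this furnishes the upper solution $\bar\varphi\in\Gamma$ needed to launch the monotone iteration of \cref{principal}.
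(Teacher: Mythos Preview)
Your proof is correct and follows essentially the same approach as the paper: the same three-case split according to the signs of $t$ and $t+s$, with the key computation $g_{s}'(t)=\kappa\mu e^{\mu t}(1-e^{\lambda t})\ge 0$ in the intermediate regime. You add some extra care (justifying the choice $\beta=\mu$ via \cref{lema_cond1} and \cref{lemaH2}, checking $C^{1}$-regularity at the junction, and noting continuity of $g_{s}$ to glue the pieces), but the argument is otherwise identical to the paper's.
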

    \begin{proof}  First, it is easy to see that:
    ﻿
    $$
    \bar{\varphi}(-\infty)=0 \quad \text{and} \quad \bar{\varphi}(+\infty)=\kappa .
    $$
    ﻿and that $\bar{\varphi}$ is increasing. In order to  show that for all $s>0$, the function $\gamma(t):=e^{\beta t}[\bar{\varphi}(t+s)-\bar{\varphi}(t)]$, for $t \in \mathbb{R}$, is non-decreasing, we consider 3 cases.
    \begin{description}
    \item[Case 1:] If $t<0$ and $t+s \leq 0$, then
    ﻿
    $$
    \begin{aligned}
    \gamma(t) & =e^{\mu t}\left[\kappa \frac{\mu}{\mu+\lambda} e^{\lambda(t+s)}-\kappa \frac{\mu}{\mu+\lambda} e^{\lambda t}\right] \\
    & =\kappa \frac{\mu}{\mu+\lambda} e^{(\mu+\lambda) t}\left(e^{\lambda s}-1\right)
    \end{aligned}
    $$
    ﻿
    Then,
    ﻿
    $$
    \gamma^{\prime}(t)=\kappa \mu\left(e^{\lambda s}-1\right) e^{(\mu+\lambda) t}>0
    $$
    ﻿
    \item[Case 2:] If $t \leq 0$ and $t+s>0$, then
    ﻿
    $$
    \gamma(t)=e^{\mu t}\left[\kappa\left(1-\frac{\lambda}{\mu+\lambda} e^{-\mu(t+s)}\right)-\kappa \frac{\mu}{\mu+\lambda} e^{\lambda t}\right] .
    $$
    ﻿
    Then,
    ﻿
    $$
    \begin{aligned}
    \gamma^{\prime}(t) & =\kappa e^{\mu t}\left[\mu\left(1-\frac{\lambda}{\lambda+\mu} e^{-\mu(t+s)}\right)-\frac{\mu^{2}}{\mu+\lambda} e^{\lambda t}+\frac{\lambda \mu}{\mu+\lambda} e^{-\mu(t+s)}-\frac{\mu \lambda}{\lambda+\mu} e^{\lambda t}\right] \\
    & =\kappa e^{\mu t}\left[\mu-\frac{\mu}{\lambda+\mu} e^{\lambda t}(\mu+\lambda)\right]=\kappa \mu e^{\mu t}\left(1-e^{\lambda t}\right) \geq 0
    \end{aligned}
    $$
    ﻿
    \item[Case 3:] If $t>0$ and $t+s>0$, then
    ﻿
    $$
    \begin{aligned}
    \gamma(t) & =\kappa e^{\mu t}\left[-\frac{\lambda}{\mu+\lambda} e^{-\mu(t+s)}+\frac{\lambda}{\mu+\lambda} e^{-\mu t}\right] \\
    & =\kappa \frac{\lambda}{\mu+\lambda}\left(1-e^{-\mu s}\right),
    \end{aligned}
    $$
    ﻿
    from which $\gamma^{\prime}(t)=0$.
    \end{description}
      \end{proof}
    ﻿
    The following lemma  proves that the upper solution \eqref{dsuper} and the lower solution \eqref{dsub} are compatible, i.e., they satisfy conditions (C1) and (C3).
    \begin{lemma}\label{lemasubsuper}
    Let $\alpha>0$ be as in \cref{lema_subsol} and additionally suppose that $\alpha \leq \dfrac{\kappa \mu}{\lambda+\mu}$. Then the upper solution $\bar{\varphi}$ and lower solution $\underline{\varphi}$ given in \eqref{dsuper} and \eqref{dsub}, respectively, satisfy:
    ﻿
    \begin{enumerate}
      \item $\underline{\varphi}(t) \leq \bar{\varphi}(t)$, for all $t \in \mathbb{R}$;
      \item The function $t \rightarrow e^{\beta t}[\bar{\varphi}(t)-\underline{\varphi}(t)]$, for $t \in \mathbb{R}$ is non-decreasing.
    \end{enumerate}
    \end{lemma}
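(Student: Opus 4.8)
\emph{Proof sketch.} The plan is to verify (1) and (2) directly on the three-piece subdivision of $\mathbb{R}$ determined by the breakpoints $t_0$ and $0$ of the two functions, keeping in mind that $\beta=\mu$ and that the standing assumption $\alpha\le\frac{\kappa\mu}{\lambda+\mu}$ is the quantity that makes the relevant coefficients non-negative.

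For item (1) I would first record that $\bar\varphi\ge 0$ everywhere: on $(-\infty,0]$ this is obvious, and on $(0,\infty)$ one has $1-\frac{\lambda}{\mu+\lambda}e^{-\mu t}\ge 1-\frac{\lambda}{\mu+\lambda}=\frac{\mu}{\mu+\lambda}>0$. Hence on $(t_0,\infty)$, where $\underline\varphi\equiv 0$, the inequality $\underline\varphi\le\bar\varphi$ is immediate. On $(-\infty,t_0]$ both functions are given by their first branches; since $t\le t_0$ forces $e^{\epsilon(t-t_0)}\le 1$, we get $0\le 1-e^{\epsilon(t-t_0)}\le 1$, so $\underline\varphi(t)\le\alpha e^{\lambda t}$, and $\alpha\le\frac{\kappa\mu}{\lambda+\mu}$ gives $\underline\varphi(t)\le\frac{\kappa\mu}{\lambda+\mu}e^{\lambda t}=\bar\varphi(t)$. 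This establishes (C1).

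For item (2) set $G(t):=e^{\mu t}[\bar\varphi(t)-\underline\varphi(t)]$. Since $\bar\varphi$ and $\underline\varphi$ are continuous (note $\underline\varphi(t_0)=0$), $G$ is continuous on $\mathbb{R}$, so it suffices to show $G$ is non-decreasing on each of $(-\infty,t_0]$ and $[t_0,\infty)$. On $[t_0,\infty)$ we have $\underline\varphi\equiv 0$, so $G(t)=e^{\mu t}\bar\varphi(t)$, which equals $\frac{\mu\kappa}{\mu+\lambda}e^{(\mu+\lambda)t}$ for $t\le 0$ and $\kappa e^{\mu t}-\frac{\lambda\kappa}{\mu+\lambda}$ for $t>0$; each is increasing and the two agree at $t=0$, so $e^{\mu t}\bar\varphi(t)$ is non-decreasing on $\mathbb{R}$ (consistently with $\bar\varphi\in\Gamma$ from \cref{subperfil}). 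On $(-\infty,t_0]$ a short computation gives
$$G(t)=\Bigl(\tfrac{\mu\kappa}{\mu+\lambda}-\alpha\Bigr)e^{(\mu+\lambda)t}+\alpha e^{-\epsilon t_0}\,e^{(\mu+\lambda+\epsilon)t},$$
so that
$$G'(t)=\Bigl(\tfrac{\mu\kappa}{\mu+\lambda}-\alpha\Bigr)(\mu+\lambda)e^{(\mu+\lambda)t}+\alpha e^{-\epsilon t_0}(\mu+\lambda+\epsilon)e^{(\mu+\lambda+\epsilon)t}\ge 0,$$
because $\frac{\mu\kappa}{\mu+\lambda}-\alpha\ge 0$ by hypothesis and the second summand is manifestly non-negative. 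Gluing the two intervals using continuity of $G$ at $t_0$ yields that $G$ is non-decreasing on $\mathbb{R}$, which is (C3).

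There is no substantial obstacle in this lemma; the only point demanding a little care is that monotonicity must be transferred \emph{across} the corner points $t_0$ and $0$, which is handled by continuity of $G$ together with piecewise non-negativity of $G'$, and the role of the extra hypothesis $\alpha\le\frac{\kappa\mu}{\lambda+\mu}$ is exactly to keep the coefficient of $e^{(\mu+\lambda)t}$ in $G$ non-negative (it is also what is used in part (1)).
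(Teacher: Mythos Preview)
Your proof is correct and follows essentially the same approach as the paper's: both split into the cases $t\le t_0$ and $t>t_0$, use $\alpha\le\frac{\kappa\mu}{\lambda+\mu}$ for (1) on $(-\infty,t_0]$, and differentiate the explicit expression for $e^{\beta t}[\bar\varphi(t)-\underline\varphi(t)]$ (with $\beta=\mu$) on $(-\infty,t_0]$ to get non-negativity of $G'$ from the same sign condition on $\frac{\kappa\mu}{\lambda+\mu}-\alpha$. Your explicit handling of continuity at the breakpoints $t_0$ and $0$ is a welcome addition, and your formula for $G$ on $(-\infty,t_0]$ is in fact cleaner than the paper's (which contains minor typos in the exponent and a missing $\alpha$).
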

    ﻿
    \begin{proof} First, observe that if $t>t_{0}$, then clearly $\underline{\varphi}(t) \leq \bar{\varphi}(t)$. Now, if $t \leq t_{0}<0$, then $0<1-e^{\epsilon\left(t-t_{0}\right)} \leq 1$. Thus,
    ﻿
    $$
    \underline{\varphi}(t)=\alpha\left(1-e^{\epsilon\left(t-t_{0}\right)}\right) e^{\lambda t} \leq \alpha e^{\lambda t} \leq \kappa \frac{\mu}{\lambda+\mu} e^{\lambda t}=\bar{\varphi}(t)
    $$
    ﻿
    Finally, observe that if $t>t_{0}$, then $e^{\beta t}[\bar{\varphi}(t)-\underline{\varphi}(t)]=e^{\beta t} \bar{\varphi}(t)$, which is clearly non-decreasing on $\left(t_{0},+\infty\right)$ by the monotonicity of the upper solution $\bar{\varphi}$. If $t<t_{0}$, then:
    ﻿
    $$
    e^{\beta t}[\bar{\varphi}(t)-\underline{\varphi}(t)]=e^{\lambda t}\left(\frac{\kappa \mu}{\lambda+\mu}-\alpha+e^{\epsilon\left(t-t_{0}\right)}\right),
    $$
    ﻿
    whose derivative is:
    ﻿
    $$
    e^{\lambda t}\left(\lambda\left(\frac{\kappa \mu}{\lambda+\mu}-\alpha\right)+(\epsilon+1) e^{\epsilon\left(t-t_{0}\right)}\right) \geq 0
    $$
    ﻿
    thus we have the desired monotonicity on $\left(-\infty, t_{0}\right]$.
    \end{proof}
    ﻿
    ﻿
    Figure \ref{supersub} compares the upper solution $\bar{\varphi}$ and lower solution $\varphi$ to verify their compatibility. The dashed curve is the graph of the function $t \rightarrow e^{\beta t}(\bar{\varphi}(t)-\underline{\varphi}(t))$, for $t \in \mathbb{R}$, and it is observed to be non-decreasing. We consider the same data as in Figure \ref{vsuper} and Figure \ref{vsub}  for the upper solution and lower solution, respectively.
    \begin{figure}[H]
    \begin{center}
    \includegraphics[width=12.5cm]{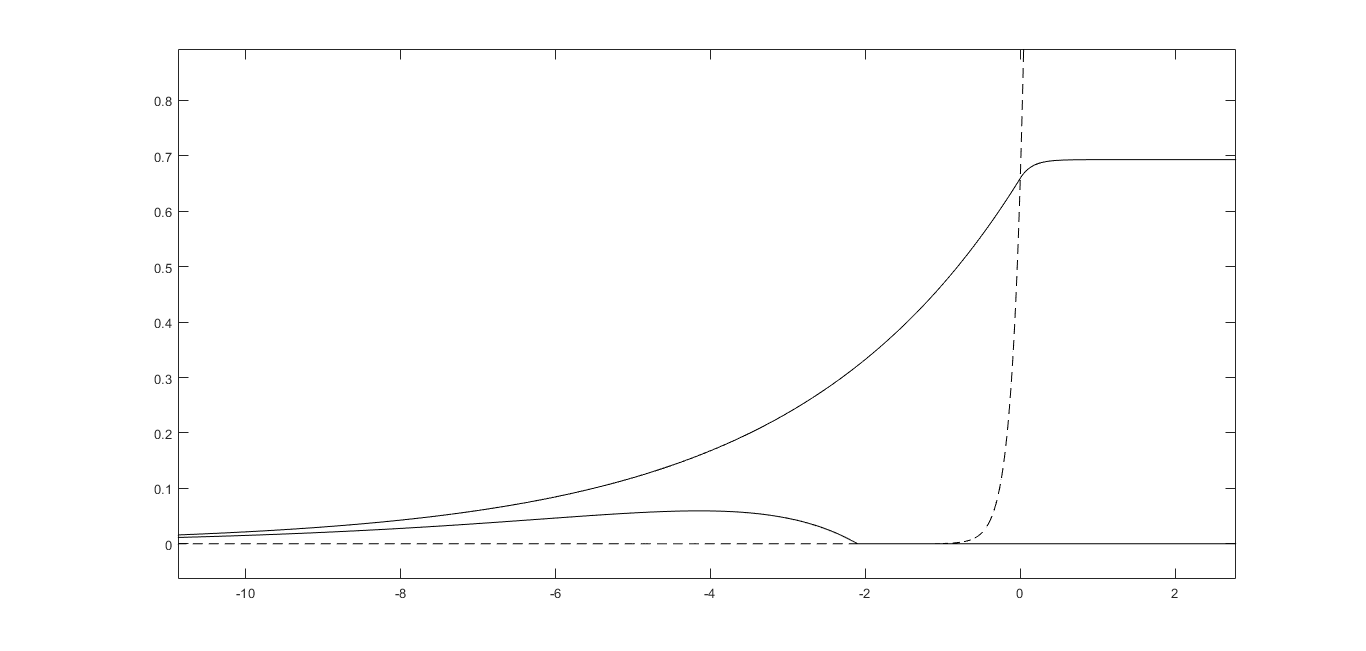}
    \caption{Compatibility of upper solution and lower solution}
    \label{supersub}
    \end{center}
    \end{figure}
%%%%%%%%%%%%%%%%%%%%%%%%%%%%%%%%%%%%%%%%%%%%%%%%
    ﻿
\subsection*{Proof of Theorem \ref{uno}}       In summary, assuming (A1), (A2), (A3), and by the Lemmas \ref{supersol}, \ref{lema_subsol}, \ref{subperfil}, and \ref{lemasubsuper},  we have established the existence of a pair of upper and lower solutions for the Nicholson's blowflies model involving a linear harvesting term with delay \eqref{NichH}, satisfies conditions (C1)-(C3), and with the associated functional $f$ satisfies hypotheses (H1) and (H2). Therefore, Theorem \ref{principal} implies the main Theorem \ref{uno}.

    \subsection*{3.3. Numerical Approximation}
    The iterative scheme \eqref{esquema}  allows us to obtain a sequence of functions
     $\left\{x_{m}\right\}_{m=1}^{\infty}$ such that, for $t \in \mathbb{R}$
    ﻿
    $$
    \lim _{m \rightarrow \infty} x_{m}(t)=: x(t) \text{ exists } \quad \text{and} \quad \underline{\varphi}(t) \leq x(t) \leq \bar{\varphi}(t)
    $$
    ﻿
    Moreover, $x$ is a non-decreasing  solution of \eqref{edofuncional} and satisfies the asymptotic conditions
    ﻿
    $$
    \lim _{t \rightarrow-\infty} x(t)=0 \text{ and } \lim _{t \rightarrow+\infty} x(t)=K.
    $$
    ﻿
    The functions $x_{m}$ for $m=0, \ldots$, are given by
    ﻿
    \begin{empheq}[left=\empheqlbrace]{align}
    x_{m}(t)=&\int_{-\infty}^{t} e^{-\beta(t-s)} \mathbf{H}\left(x_{m-1}\right)(s) d s,   \label{iterativo}\\[0.2cm]
    x_{0}(t)=&\bar{\varphi}(t), \quad t \in \mathbb{R}.\nonumber
    \end{empheq}
    
    ﻿
    ﻿
    Applied to the Nicholson model with linear harvesting term
    ﻿
    $$
    x^{\prime}(t)=-\delta x(t)-H x(t-\sigma)+\rho x(t-r) e^{-x(t-r)}
    $$
    ﻿
    the operator $\mathbf{H}: C(\mathbb{R} ; \mathbb{R}) \rightarrow C(\mathbb{R} ; \mathbb{R})$ becomes
    ﻿
    $$
    \begin{aligned}
    \mathbf{H}(\phi)(s): & =\beta \phi_{s}(0)+f\left(\phi_{s}\right) \\
    & =(\beta-\delta) \phi(s)-H \phi(s-\sigma)+\rho \phi(s-r) e^{-\phi(s-r)}, 
    \end{aligned}
    $$
for   \( \phi \in C(\mathbb{R}, \mathbb{R}), s \in \mathbb{R}\). To plot the approximations given by scheme \eqref{iterativo}, we consider the specific values $\delta=1, H=2, \rho=6, \sigma=0.15, r=1.8$. The iteration is initiated with the upper solution \eqref{dsuper}, that is
    \[
    \bar{\varphi}(t):= \begin{cases}
    \kappa \dfrac{\mu}{\mu+\lambda} e^{\lambda t}, & t \leq 0, \\
    & \\
    \kappa\left(1-\dfrac{\lambda}{\mu+\lambda} \, e^{-\mu t}\right), & t > 0.
    \end{cases}
    \]
    ﻿
    where $\kappa=0.6931 \approx \ln \left(\frac{\rho}{\delta+H}\right), \mu=\beta=6.7093$ is such that
    ﻿
    $$
    \mu-\delta-H e^{\sigma \mu}>0,
    $$
    ﻿
    and $\lambda \approx 0.3420$ satisfies the characteristic equation at equilibrium $x_{0}=0$,
    ﻿
    $$
    \chi_{0}(\lambda)=-\lambda-\delta-H e^{-\lambda \sigma}+\rho e^{-\lambda r}=0.
    $$
    ﻿
    Below we plot the first 4 iterations using the integral formula in \eqref{iterativo} (Figure \ref{iteraciones}).
    \begin{figure}[H]
    \begin{center}
    \includegraphics[ width=12.5cm]{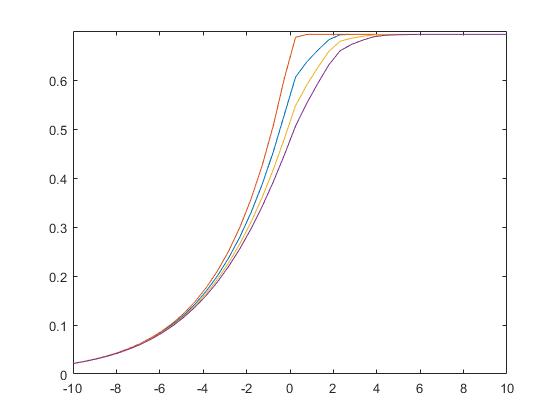}
    \caption{First 4 iterations approximating the monotone heteroclinic solution connecting the equilibria 0 and $\kappa$.}
    \label{iteraciones}
    \end{center}
    \end{figure}
    ﻿\section*{Conclusions}

In this paper we demonstrate the existence of monotone heteroclinic solutions for the Nicholson model with two distinct delays ($\sigma\neq r$) and linear harvesting term, addressing a scenario that has so far not been treated with monotone iteration techniques in the literature. By connecting the equilibria $0$ and $\ln\left(\frac{\rho}{\delta+H}\right)$ under the conditions $1 < \frac{\rho}{\delta+H} \leq e$ and $\sigma < r$, we partially answer the open problem proposed in \cite{LBEBI} on global dynamics with multiple delays for the Nicholson model, via an adaptation of Wu and Zou's method in \cite{WZ} that included the explicit construction of new upper- and lower-solutions with their respective handling of the analytical complexities of having two different delays. Finally, the numerical simulations performed validate our results.
\section*{Funding}
This research was partially supported by Universidad del Bío-Bío (Internal Regular Project RE2448106 to A.G.)
    %%%%%%%%%%%%%%%%%%%%%%%%%%%%%%%%%%%%%%%%%%%%%%%%%%%%%%%%%%%%%%%%%%%%%%%%%%%%%%%%%%%%%%%%%%%%%%%%%%%%%%%%%%%%%%%%%%%%%%%%%%%%%%%%%%%%%%%%%%%%%%%%%%%%%%%%%%%%%%%%%%%%%%%%%%%%%%%%%%%%%%%%%%%%%%

    \end{document}